\newtheorem{Exaexa}{Example}[section]
\numberwithin{equation}{section}
\newtheorem{thm}{Theorem}[section]
\theoremstyle{definition}
\newtheorem{rem}{Remark}[section]
\theoremstyle{plain}
\newtheorem{lem}[thm]{Lemma}
\newtheorem{cor}{Corollary}[section]
\begin{document}

\begin{center}{\large{\bf{ Some subgroups of a finite field and their applications for obtaining explicit factors}}}\end{center}
\begin{center} Manjit Singh \\
 Email: manjitsingh.math@gmail.com\\
Department of Mathematics, Deenbandhu Chhotu Ram University of Science and Technology, Murthal-131039, India\end{center}
\begin{abstract}   Let $\mathcal{S}_q$ denote the group  of all square elements in  the multiplicative group $\mathbb{F}_q^*$ of  a finite field $\mathbb{F}_q$  of odd characteristic containing  $q$ elements. Let $\mathcal{O}_q$ be the set  of  all odd order elements of $\mathbb{F}_q^*$.  Then $\mathcal{O}_q$ turns up as a subgroup of $\mathcal{S}_q$. In this paper, we show that $\mathcal{O}_q=\langle4\rangle$ if $q=2t+1$ and, $\mathcal{O}_q=\langle t\rangle $ if $q=4t+1$, where   $q$ and $t$ are odd primes.  This paper  also gives a direct method for obtaining the  coefficients of irreducible factors of $x^{2^nt}-1$  in $\mathbb{F}_q[x]$  using the information of generator elements of $\mathcal{S}_q$  and $\mathcal{O}_q$, when $q$ and $t$ are odd primes such that  $q=2t+1$ or $q=4t+1$. \end{abstract}
\textbf{Keywords}:  Cyclotomic polynomials, Irreducible polynomials, Finite fields.\medskip \\
\textbf{Mathematics Subject Classification (2010)}: 11T05, 11T55,  12E10.
\section{Introduction}
 Factoring polynomials over finite fields plays an important role in  algebraic coding theory for the error-free transmission of information and cryptology for the secure transmission of information. Please note that the availability of explicit factors of $x^m-1$ over finite fields, especially irreducible polynomials over finite fields is useful for analyzing the structure and inner-relationship of codewords of a code and other areas of electrical engineering where linear feedback shift registers (LFSR) are used (see \cite{ber, gol, lidl}).  For example, factoring of $x^{m}-1$  into irreducible factors over finite fields  is essentially useful to describe the theory of cyclic codes of length $m$ over finite fields. 

Blake, Gao and Mullin \cite{bla} explicitly determined all the irreducible factors of  $x^{2^n}\pm1$ over $\mathbb{F}_p$, where $p$ is a prime with $p\equiv3\pmod 4$.  Chen, Li and Tuerhong  \cite{chen} gave the explicit factorization of $x^{2^m p^n}-1$ over $\mathbb{F}_q$, where $p$ is an odd prime with $q \equiv1\pmod p$. In \cite{bro}, Brochero Mart\'{i}nez,  Giraldo Vergara and de Oliveira  generalized the results in \cite{chen} by giving the explicit factorization of $x^{m}-1$  over $\mathbb{F}_q$, where every prime factor of $m$ divides $q-1$. Meyn \cite{meyn} obtained the irreducible factors of cyclotomic polynomials $\Phi_{2^k}(x)$ over $\mathbb{F}_q$ when  $q\equiv3\pmod 4$. Fitzgerald and Yucas \cite{fitz7} obtained the explicit factorization of $\Phi_{2^k3}(x)$ when $q\equiv\pm1\pmod 3$. In \cite{ww},  Wang and Wang obtained the explicit factorization of $\Phi_{2^k5}(x)$ for  $q\equiv \pm 2\pmod 5$. When $q$ and  $r$ are distinct odd primes,  Stein \cite{stein}  computed the factors of $\Phi_{r}(x)$  from the traces of the roots of $\Phi_r(x)$ over  prime field $\mathbb{F}_q$.  Assuming that the explicit factors of $\Phi_r(x)$ are known, Tuxanidy and Wang  \cite{tw} obtained the irreducible factors of $\Phi_{2^nr}(x)$ over $\mathbb{F}_q$, where $r>1$ is an arbitrary odd integer.

 In this paper,  the explicit factorization of $\Phi_{2^nd}(x)$  over $\mathbb{F}_q$ is revisited for every odd prime divisor $d$ of $q-1$.   Using this factorization, when $q$ and $t$ are primes with $q=2lt+1$ for $l=1, 2$,  the  explicit factors of $\Phi_{2^nt}(x)$  over $\mathbb{F}_q$ are determined. This factorization provides the complete information regarding the coefficients of irreducible factors of   of  $x^{2^nt}-1$ over $\mathbb{F}_q$.    

 The  paper is organized as follows:  The necessary notations and some known results to be used throughout the paper are provided in Section 2.  In Section 3,  assuming $d$ is an odd divisor of $q-1$,   the explicit factorization of $x^{2^nd}-1$ over $\mathbb{F}_q$ is reformulated in two different cases when  $q\equiv1\pmod{4}$ in Theorem \ref{fact2nd4k+1} and, when $q\equiv3\pmod 4$ in Theorem \ref{fact2t+1}. In end of Section 3, the explicit factorization of cyclotomic polynomial $\Phi_{2^nd}(x)$ over $\mathbb{F}_q$ is obtained for every odd prime $d$ with $d|(q-1)$. In  Section 4, we record few results concerning the square elements in the multiplicative group $\mathbb{F}_q^*$ which appears to be new. Using these results,   the coefficients of  irreducible factors  of $x^{2^nt}-1$ over $\mathbb{F}_q$ are obtained effortlessly when $q$ and $t$ are primes with $q=2lt+1$ for $l=1$ or $2$.	 In Section 5, in order to illustrate our results  obtained in  Section 4, we find  the factorization of $x^{2^{173\cdot2^n}}-1\in\mathbb{F}_{347}[x]$,  $x^{{704}}-1\in\mathbb{F}_{23}[x]$, $x^{2^{37\cdot2^n}}-1\in\mathbb{F}_{149}[x]$ and  $x^{2^{13\cdot2^n}}-1\in\mathbb{F}_{53}[x]$. As a consequence to our factorization, we obtain infinite families of binomials and trinomials over finite fields.
\section{The cyclotomic factorization of $x^{2^nd}-1$ over finite fields}
 It is well-known that for any integer $n\ge1$,  the {\it{cyclotomic decomposition}} of $x^n-1$ is given by  \begin{center}$\displaystyle{x^n-1=\prod_{k|n}\Phi_k(x)}$;\quad $\displaystyle{\Phi_k(x)=\prod_{\substack{\gcd(i,k)=1\\0\le i\le k}}(x-\xi^i)}$,\end{center}    
where  $\xi$ is a primitive $k$th root of unity in some extension field of $\mathbb{F}_q$ and $\Phi_k(x)$ is the $k$th cyclotomic polynomial.   The degree of $\Phi_k(x)$ is $\phi(k)$, where $\phi(k)$ is the Euler Totient function. Let $e$ be the least positive integer such that $q^e\equiv1\pmod n$. Then, in $\mathbb{F}_q[x]$, $\Phi_n(x)$ splits into the product of $\phi(n)/e$ monic irreducible polynomials of degree $e$. In particular, $\Phi_n(x)$ is irreducible over $\mathbb{F}_q$ if and only if $e=\phi(n)$. Note that $\Phi_n(x)$ is irreducible over $\mathbb{F}_q$, then  $\Phi_m(x)$ is also irreducible over $\mathbb{F}_q$ for every $m|n$ (see \cite{lidl,roman}). 

\begin{lem}[see Theorem 10.7 in \cite{wan} and Theorem 3.75 in \cite{lidl}]\label{birr} Let $l\geq 2$ be an integer and $a\in \mathbb{F}_q^*$ such that the order of $a$ is $k\ge2$. Then the binomial $x^l-a\in \mathbb{F}_q[x]$  is irreducible over $\mathbb{F}_q$ if and only if the following conditions are satisfied:\begin{enumerate}
\item[(i)] Every prime factor of $l$ divides $k$, but not $(q-1)/k$;
\item[(ii)] If $4|l$, then $4|(q-1)$.\end{enumerate}\end{lem}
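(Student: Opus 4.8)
The statement is the classical Vahlen--Capelli irreducibility criterion for binomials, specialized to finite fields, so my plan is to establish the field-independent version and then translate its hypotheses into the stated divisibility conditions on $k$. Concretely, over an arbitrary field $F$ one has: $x^l-a$ is irreducible over $F$ if and only if \textbf{(a)} $a$ is not a $p$-th power in $F$ for every prime $p\mid l$, and \textbf{(b)} if $4\mid l$ then $a\notin-4F^4$. I would first prove this biconditional (or cite it from \cite{lidl, wan}); the genuinely new content is then the rewriting of (a) and (b) using the cyclicity of $\mathbb F_q^*$.

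For the field-independent criterion I would isolate the two reduction lemmas behind Vahlen--Capelli: (1) if $x^l-a$ is irreducible and $d\mid l$, then $x^d-a$ is irreducible, which follows from the tower law since a root $\beta$ of $x^d-a$ equals $\alpha^{l/d}$ for a root $\alpha$ of $x^l-a$, and $l=[F(\alpha):F]=[F(\alpha):F(\beta)]\,[F(\beta):F]\le(l/d)\cdot d=l$ forces $[F(\beta):F]=d$; and (2) Serret's lemma, that for a prime $p$ the binomial $x^p-a$ is irreducible precisely when $a\notin F^p$, proved by a constant-term computation on a hypothetical proper factor combined with a B\'ezout identity $ud+vp=1$. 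Lifting from primes to prime powers and amalgamating coprime exponents, together with the identity $x^4+4c^4=(x^2-2cx+2c^2)(x^2+2cx+2c^2)$ that forces the exceptional $2$-power condition, yields the full criterion. In particular the easy necessity direction drops out: if $x^l-a$ is irreducible then $x^p-a$ is irreducible (hence $a\notin F^p$) for each prime $p\mid l$, and $x^4-a$ is irreducible (hence $a\notin-4F^4$) when $4\mid l$.

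The core of the specialization is translating (a) and (b) over $\mathbb F_q$. Since $\mathbb F_q^*$ is cyclic of order $q-1$ and $k=\operatorname{ord}(a)$ divides $q-1$: if $p\nmid q-1$ then $x\mapsto x^p$ is a bijection of $\mathbb F_q^*$, so $a\in\mathbb F_q^p$ and $x^l-a$ is reducible; thus $p\mid q-1$ is forced, and then $\mathbb F_q^p$ is the unique subgroup of index $p$, so $a\in\mathbb F_q^p\iff k\mid(q-1)/p\iff p\mid(q-1)/k$. Comparing $p$-adic valuations shows ``$p\mid q-1$ and $p\nmid(q-1)/k$'' is equivalent to ``$p\mid k$ and $p\nmid(q-1)/k$'', which is exactly condition (i). For (ii), suppose $4\mid l$; then (i) applied to $p=2$ gives $2\mid k$ and $2\nmid(q-1)/k$, i.e.\ $a$ is a non-square. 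If $q\equiv1\pmod4$ then $-1$ is a square, $i:=\sqrt{-1}\in\mathbb F_q$, and $(1+i)^4=(2i)^2=-4$, so $-4F^4=F^4\subseteq F^2$; since $a\notin F^2$, condition (b) holds automatically. If $q\equiv3\pmod4$ then $\gcd(4,q-1)=2$ forces $\mathbb F_q^4=\mathbb F_q^2$, while $-4=-1\cdot2^2$ is a non-square, so $-4F^4$ is precisely the set of non-squares and contains $a$; hence (b) fails. Therefore (b) holds iff $q\equiv1\pmod4$, which is condition (ii), and altogether (a)\,\&\,(b)\,$\iff$\,(i)\,\&\,(ii).

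I expect the main obstacle to be the sufficiency half of the Vahlen--Capelli step, in particular the inductive lifting from $x^p-a$ to $x^{p^e}-a$ and the combination of coprime prime-power exponents, where the $p=2$ branch genuinely requires the extra $-4F^4$ hypothesis and must be tracked with care. By contrast the finite-field translation is elementary once one notes the two identities $-4=(1+\sqrt{-1})^4$ for $q\equiv1\pmod4$ and $\mathbb F_q^4=\mathbb F_q^2$ for $q\equiv3\pmod4$, together with the remark that condition (i) already encodes ``$a$ is a non-square'' through the prime $p=2$.
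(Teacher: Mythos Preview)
Your argument is correct: the Vahlen--Capelli criterion is stated accurately, the necessity direction and reduction lemmas are the standard ones, and your translation of the abstract conditions $a\notin F^{p}$ and $a\notin-4F^{4}$ into the divisibility statements (i) and (ii) via the cyclicity of $\mathbb{F}_q^{*}$ is clean and complete. In particular the dichotomy for condition (b)---using $(1+\sqrt{-1})^{4}=-4$ when $q\equiv1\pmod4$ and $\mathbb{F}_q^{4}=\mathbb{F}_q^{2}$ when $q\equiv3\pmod4$---is exactly the right computation.

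However, there is nothing to compare against: the paper does not prove this lemma. It is quoted verbatim as a known result from the textbooks of Wan (Theorem~10.7) and Lidl--Niederreiter (Theorem~3.75), with no accompanying argument. What you have written is essentially the proof one finds in those references, so your proposal supplies what the paper deliberately outsources. The only part you leave at the level of a sketch is the sufficiency half of Vahlen--Capelli (the inductive lifting to prime powers and the coprime amalgamation), and you flag this yourself; that step is indeed routine once Serret's lemma and the Sophie Germain identity are in hand, so the plan would go through without surprises.
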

\begin{lem}[Theorem 10.15 in \cite{wan}] \label{tirr}Let $f(x)$ be any irreducible polynomial over $ \mathbb{F}_q$ of degree $l\geq1$. Suppose that $f(0)\neq0$ and $f(x)$ is of order $e$ which is equal to the order of any root of $f(x)$.  Let $k$ be a positive integer, then the polynomial $f(x^{k})$  is irreducible over $\mathbb{F}_q$ if and only if the following three conditions are satisfied:\begin{enumerate}
\item[(i)] Every prime divisor  of $k$ divides $e$;
\item[(ii)] gcd$(k, \frac{q^l-1}{e})=1$;
\item[(iii)]  If $4|k$, then $4|(q^l-1)$.\end{enumerate} \end{lem}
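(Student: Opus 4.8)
The plan is to reduce the irreducibility of $f(x^{k})$ over $\mathbb{F}_q$ to that of a single binomial over the splitting field of $f$, and then quote Lemma~\ref{birr}. Fix a root $\alpha$ of $f$ in its splitting field; since $f$ is irreducible of degree $l$ and $f(0)\neq 0$, we have $\mathbb{F}_q(\alpha)=\mathbb{F}_{q^{l}}$ and $\alpha\in\mathbb{F}_{q^{l}}^{*}$ has order $e$. Because the roots of $f(x^{k})$ are precisely the $k$-th roots of the roots of $f$, the polynomial $f(x^{k})$ has degree $kl$, and for any root $\beta$ of $f(x^{k})$ the element $\beta^{k}$ is a root of $f$, hence a conjugate of $\alpha$ over $\mathbb{F}_q$.

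First I would prove the reduction: $f(x^{k})$ is irreducible over $\mathbb{F}_q$ if and only if $x^{k}-\alpha$ is irreducible over $\mathbb{F}_{q^{l}}$. For the ``if'' direction, a root $\beta$ of $x^{k}-\alpha$ satisfies $[\mathbb{F}_q(\beta):\mathbb{F}_q]=[\mathbb{F}_q(\beta):\mathbb{F}_{q^{l}}]\,[\mathbb{F}_{q^{l}}:\mathbb{F}_q]=k\cdot l$, which equals $\deg f(x^{k})$; as $\beta$ is a root of $f(x^{k})$, this forces $f(x^{k})$ to be irreducible. For the ``only if'' direction, take a root $\beta$ of the irreducible $f(x^{k})$; then $[\mathbb{F}_q(\beta):\mathbb{F}_q]=kl$, while $\mathbb{F}_q(\beta^{k})=\mathbb{F}_{q^{l}}$ since $\beta^{k}$ is a root of $f$, so $[\mathbb{F}_q(\beta):\mathbb{F}_{q^{l}}]=k$ and the degree-$k$ polynomial $x^{k}-\beta^{k}\in\mathbb{F}_{q^{l}}[x]$ must be the minimal polynomial of $\beta$, hence irreducible over $\mathbb{F}_{q^{l}}$. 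Finally, $\beta^{k}$ and $\alpha$ are conjugate over $\mathbb{F}_q$, so there is $\theta\in\mathrm{Gal}(\mathbb{F}_{q^{l}}/\mathbb{F}_q)$ with $\theta(\beta^{k})=\alpha$; applying $\theta$ coefficientwise to a factorization of $x^{k}-\beta^{k}$ yields a factorization of $x^{k}-\alpha$ into factors of the same degrees, so $x^{k}-\alpha$ is irreducible over $\mathbb{F}_{q^{l}}$ as well.

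Then I would finish by applying Lemma~\ref{birr} over the base field $\mathbb{F}_{q^{l}}$ with $a=\alpha$, exponent $k$, and $\mathrm{ord}(a)=e$: the binomial $x^{k}-\alpha$ is irreducible over $\mathbb{F}_{q^{l}}$ exactly when every prime factor of $k$ divides $e$ but not $(q^{l}-1)/e$, and $4\mid(q^{l}-1)$ whenever $4\mid k$. The first condition splits as ``every prime factor of $k$ divides $e$'' (this is (i)) together with ``no prime factor of $k$ divides $(q^{l}-1)/e$'', i.e. $\gcd(k,(q^{l}-1)/e)=1$ (this is (ii)), and the second condition is (iii); the degenerate cases $k=1$, and $e=1$ (where necessarily $l=1$, $f=x-1$), are checked directly. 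The main obstacle, and the only place requiring care, is the descent in the ``only if'' direction from $x^{k}-\beta^{k}$ back to $x^{k}-\alpha$: one must observe that conjugate base points give binomials with the same splitting type over $\mathbb{F}_{q^{l}}$, which is exactly what the Galois automorphism $\theta$ supplies.
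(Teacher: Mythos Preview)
Your argument is correct. The reduction of the irreducibility of $f(x^{k})$ over $\mathbb{F}_q$ to that of the binomial $x^{k}-\alpha$ over $\mathbb{F}_{q^{l}}$ is cleanly done, and the Galois twist transporting the factorization type from $x^{k}-\beta^{k}$ to $x^{k}-\alpha$ is exactly the right observation. The translation of the binomial criterion of Lemma~\ref{birr} into conditions (i)--(iii) is also accurate; in particular, your identification of ``no prime factor of $k$ divides $(q^{l}-1)/e$'' with $\gcd\!\big(k,(q^{l}-1)/e\big)=1$ is valid since both statements assert precisely that $k$ and $(q^{l}-1)/e$ share no common prime.

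As for comparison: the paper does not supply a proof of this lemma at all; it merely quotes the result as Theorem~10.15 of Wan's book~\cite{wan}. So there is no ``paper's own proof'' to match against. That said, your route---bootstrapping from the binomial criterion (Lemma~\ref{birr}) by passing to the splitting field $\mathbb{F}_{q^{l}}$---is essentially the standard textbook derivation, and it has the pleasant feature of making Lemma~\ref{tirr} an immediate corollary of Lemma~\ref{birr} rather than an independent black box. The edge cases $k=1$ and $e=1$ that you flag are indeed outside the literal hypotheses of Lemma~\ref{birr} (which requires exponent $\ge 2$ and order $\ge 2$), and your direct verification there completes the picture.
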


\begin{lem}\label{cyclotomic} Suppose that  $t$ is an odd prime such that $\gcd(2t,q)=1$. Then in $\mathbb{F}_q[x]$ the following properties of cyclotomic polynomials hold:\begin{enumerate}
\item[(i)] $\displaystyle{\Phi_{2^kt}(x)=\frac{\Phi_{2^k}(x^{t})}{\Phi_{2^k}(x)}}$,
\item[(ii)]  $\displaystyle{\Phi_{2^{k+r}}(x)=\Phi_{2^k}(x^{2^r})}$ for integers $k\ge1$ and $r\ge0$.
\item[(iii)] $\displaystyle{\Phi_{2^{n}t}(x)=\frac{\Phi_{2^k}(x^{2^{n-k}t})}{\Phi_{2^k}(x^{2^{n-k}})}}$ for all integer $n\ge k\ge1$. \end{enumerate} \end{lem}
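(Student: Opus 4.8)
The plan is to recognize that all three identities are, at bottom, classical relations among cyclotomic polynomials with integer coefficients, and that the hypothesis $\gcd(2t,q)=1$ is exactly what is needed to transfer them into $\mathbb{F}_q[x]$. Write $p$ for the characteristic of $\mathbb{F}_q$; then $\gcd(2t,q)=1$ gives $p\nmid 2^k t$ for every $k$, so the reductions modulo $p$ of the integer polynomials $\Phi_{2^k}$ and $\Phi_{2^k t}$ remain monic of degrees $\phi(2^k)$ and $\phi(2^k t)$ and remain separable; hence they coincide with the cyclotomic polynomials over $\mathbb{F}_q$ used above. It therefore suffices to establish (i) and (ii) as identities in $\mathbb{Z}[x]$, after which (iii) follows formally.

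For (ii) I would use the elementary closed form $\Phi_{2^j}(x)=x^{2^{j-1}}+1$, valid for every $j\ge 1$ (immediate by induction from $\Phi_{2^j}(x)=(x^{2^j}-1)/(x^{2^{j-1}}-1)$). Then for $k\ge 1$ and $r\ge 0$,
$$\Phi_{2^{k+r}}(x)=x^{2^{k+r-1}}+1=\bigl(x^{2^r}\bigr)^{2^{k-1}}+1=\Phi_{2^k}\bigl(x^{2^r}\bigr),$$
the case $r=0$ being trivial; the restriction $k\ge 1$ is precisely what makes the closed form applicable on the right-hand side.

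For (i), with $t$ an odd prime and $t\nmid 2^k$, I would invoke the Möbius form $\Phi_n(x)=\prod_{d\mid n}(x^{d}-1)^{\mu(n/d)}$. The divisors of $2^k t$ are exactly the $d$ and the $td$ with $d\mid 2^k$ (a disjoint union since $t\nmid 2^k$), and since $\mu\bigl(2^k t/(td)\bigr)=\mu(2^k/d)$ while $\mu(2^k t/d)=\mu(t)\mu(2^k/d)=-\mu(2^k/d)$ by multiplicativity of $\mu$ and $\gcd(t,2^k)=1$, collecting exponents yields
$$\Phi_{2^k t}(x)=\frac{\prod_{d\mid 2^k}\bigl((x^{t})^{d}-1\bigr)^{\mu(2^k/d)}}{\prod_{d\mid 2^k}(x^{d}-1)^{\mu(2^k/d)}}=\frac{\Phi_{2^k}(x^{t})}{\Phi_{2^k}(x)},$$
an identity of rational functions which is automatically an identity in $\mathbb{Z}[x]$ because its left-hand side is a polynomial. (Equivalently, one can argue directly in $\overline{\mathbb{F}_q}$: for each $\zeta$ of order $2^k$ the polynomial $x^t-\zeta$ splits into $t$ distinct roots of unity, of which exactly one has order $2^k$ and the other $t-1$ have order $2^k t$, since $\gcd(t,2^k)=1$ makes $\eta\mapsto\eta^t$ a bijection on the $2^k$-th roots of unity; summing over the $\phi(2^k)$ such $\zeta$ accounts for all roots of $\Phi_{2^k}$ once and all $\phi(2^k t)=(t-1)\phi(2^k)$ roots of $\Phi_{2^k t}$ once.)

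Finally (iii) is obtained by composing (i) and (ii): applying (i) with $k$ replaced by $n$ gives $\Phi_{2^n t}(x)=\Phi_{2^n}(x^t)/\Phi_{2^n}(x)$, and then (ii) with $r=n-k\ge 0$ rewrites $\Phi_{2^n}(x^t)=\Phi_{2^k}\bigl((x^t)^{2^{n-k}}\bigr)=\Phi_{2^k}(x^{2^{n-k}t})$ and $\Phi_{2^n}(x)=\Phi_{2^k}(x^{2^{n-k}})$; the hypothesis $n\ge k\ge 1$ is exactly what both applications of (ii) require. The only genuinely delicate point in the whole argument is the reduction step of the first paragraph — making sure the characteristic-zero identities survive passage to $\mathbb{F}_q[x]$ and that the reduced polynomials are still the cyclotomic polynomials — and this is precisely guaranteed by $\gcd(2t,q)=1$; everything else is bookkeeping.
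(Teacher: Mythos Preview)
Your proof is correct and follows the same overall structure as the paper: parts (i) and (ii) are established first and then (iii) is obtained by combining them, exactly as the paper indicates. The only difference is that where the paper simply cites \cite[Exercise~2.57]{lidl} for (i) and (ii), you supply self-contained arguments (the closed form $\Phi_{2^j}(x)=x^{2^{j-1}}+1$ for (ii) and the M\"obius product formula for (i)); this is more informative but not a genuinely different route.
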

\begin{proof} First and second part are given in \cite[Exercise 2.57]{lidl}. The third part is an immediate consequence of the parts (i) and (ii). \end{proof}
 Hereafter, let $\mathbb{F}_q$ be a finite field with $q=2^st+1$ for some integers $s\ge1$ and $t$ is odd. Let $\alpha_{2^k}$ be a primitive $2^k$th root of unity of $\mathbb{F}_q^*$, where $0\le k\le s$.  Then,
for any integer $n\ge1$, we present, without proof,  the well known factorization of $x^{2^n}-1$ over  $\mathbb{F}_q$  in the following lemma.
\begin{lem}\label{cyclo2n}   For any integer $n\ge 1$, the cyclotomic  factorization of $x^{2^n}-1$ over $\mathbb{F}_q$ is given by: \begin{eqnarray*} 
x^{2^n}-1=\left\{ \begin{array}{lcl}
\displaystyle{(x-1)\prod_{k=1}^{n}\Phi_{2^k}(x)}& \mbox{for}
& 1\le n\le s \\ \displaystyle{(x-1)\prod_{k=1}^{s}\Phi_{2^k}(x)\prod_{r=1}^{n-s}\Phi_{2^{s}}(x^{2^{r}})} & \mbox{for} & n>s\ge1
\end{array}\right.\end{eqnarray*} where factors  $\Phi_{2^k}(x)$ for $1\le k\le s$ and $\Phi_{2^s}(x^{2^r})$ for $1\le r\le n-s$ can be factors  as:\begin{center} $\displaystyle{\Phi_{2^k}(x)=\prod_{1\le i\le 2^{k-1}}(x-\alpha_{2^k}^{2i-1})}$  and $\displaystyle{\Phi_{2^s}(x^{2^r})=\prod_{1\le i\le 2^{s-1}}(x^{2^r}-\alpha_{2^s}^{2i-1})}$.\end{center}  \end{lem}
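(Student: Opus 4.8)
The plan is to derive everything from the universal cyclotomic decomposition $x^m-1=\prod_{d\mid m}\Phi_d(x)$ specialized to $m=2^n$, together with Lemma~\ref{cyclotomic}(ii). Since the divisors of $2^n$ are exactly $1,2,\dots,2^n$ and $\Phi_1(x)=x-1$, one gets immediately
\[
x^{2^n}-1=(x-1)\prod_{k=1}^{n}\Phi_{2^k}(x),
\]
which is already the first displayed identity in the range $1\le n\le s$. For $n>s$ I would split this product at $k=s$: the factors with $1\le k\le s$ are kept as they are, while for $s<k\le n$ I apply Lemma~\ref{cyclotomic}(ii) with $r=k-s\in\{1,\dots,n-s\}$, which turns $\Phi_{2^k}(x)$ into $\Phi_{2^{s+r}}(x)=\Phi_{2^s}(x^{2^r})$. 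Collecting the two pieces gives exactly the second displayed identity.

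It then remains to record the factorizations of the individual blocks. For $1\le k\le s$ we have $2^k\mid q-1$, so the cyclic group $\mathbb{F}_q^*$ of order $q-1$ contains an element $\alpha_{2^k}$ of order exactly $2^k$; equivalently, the least $e$ with $q^e\equiv 1\pmod{2^k}$ equals $1$, so every $2^k$-th root of unity already lies in $\mathbb{F}_q$. The primitive ones are the powers $\alpha_{2^k}^{\,i}$ with $\gcd(i,2^k)=1$, i.e.\ with $i$ odd, and running $i$ over the odd residues $1,3,\dots,2^k-1$ is the same as writing $i=2j-1$ with $1\le j\le 2^{k-1}$. Hence $\Phi_{2^k}(x)=\prod_{1\le i\le 2^{k-1}}(x-\alpha_{2^k}^{2i-1})$, a product of $\phi(2^k)=2^{k-1}$ distinct monic linear polynomials over $\mathbb{F}_q$. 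Substituting $x\mapsto x^{2^r}$ in this identity for $k=s$ yields $\Phi_{2^s}(x^{2^r})=\prod_{1\le i\le 2^{s-1}}(x^{2^r}-\alpha_{2^s}^{2i-1})$, which is the last assertion.

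I do not anticipate a real obstacle: once the cyclotomic decomposition and Lemma~\ref{cyclotomic}(ii) are available, the statement is essentially bookkeeping. The one point worth a word of care is why the second form ($n>s$) is needed rather than cosmetic: for $k>s$ one has $2^k\nmid q-1$, so $\Phi_{2^k}(x)$ does not split into linear factors over $\mathbb{F}_q$, and rewriting it as $\Phi_{2^s}(x^{2^{k-s}})$ (and then as a product of binomials $x^{2^r}-\alpha_{2^s}^{2i-1}$) displays the coarsest factorization over $\mathbb{F}_q$ that uses only roots of unity actually present in $\mathbb{F}_q$; deciding which of these binomials are irreducible is a separate matter handled by Lemma~\ref{birr} in the next section.
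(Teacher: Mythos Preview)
Your argument is correct. Note, however, that the paper explicitly states this lemma \emph{without proof}, calling the factorization ``well known''; so there is no proof in the paper to compare against. What you have written is precisely the standard justification one would supply: the cyclotomic decomposition $x^{2^n}-1=\prod_{k=0}^{n}\Phi_{2^k}(x)$, the rewriting $\Phi_{2^{s+r}}(x)=\Phi_{2^s}(x^{2^r})$ from Lemma~\ref{cyclotomic}(ii) for the tail $k>s$, and the observation that for $k\le s$ all primitive $2^k$th roots of unity already lie in $\mathbb{F}_q^*$, so $\Phi_{2^k}$ splits into the listed linear factors. Your closing remark that irreducibility of the binomials $x^{2^r}-\alpha_{2^s}^{2i-1}$ is a separate issue (settled by Lemma~\ref{birr}) is also accurate and matches how the paper proceeds.
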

 The above lemma immediately gives the following:

\begin{lem}\label{fact2ndcyclo} For any integer $n\ge1$ and odd integer $d$, the factorization of $x^{2^nd}-1$ into   decomposable cyclotomic polynomials  over $\mathbb{F}_q$  is
\begin{eqnarray*} \label{gencyclo}
x^{2^nd}-1=\left\{ \begin{array}{lcl}
\displaystyle{(x^d-1)\prod_{k=1}^{n}\Phi_{2^k}(x^d)}& \mbox{for}
& 1\le n\le s \\ \displaystyle{(x^d-1)\prod_{k=1}^{s}\Phi_{2^k}(x)\prod_{r=1}^{n-s}\Phi_{2^{s}}(x^{2^{r}d})} & \mbox{for} & n>s\ge1
\end{array}\right.\end{eqnarray*} where factors $\Phi_{2^k}(x^d)$ for $1\le k\le s$ and $\Phi_{2^s}(x^{2^rd})$ for $0\le r\le n-s$  can be factors as:\begin{center} $\displaystyle{\Phi_{2^k}(x^d)=\prod_{1\le i\le 2^{k-1}}(x^d-\alpha_{2^k}^{2i-1})}$  and $\displaystyle{\Phi_{2^s}(x^{2^rd})=\prod_{1\le i\le 2^{s-1}}(x^{2^rd}-\alpha_{2^s}^{2i-1})}$.\end{center} 
\end{lem}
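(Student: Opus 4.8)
The plan is to deduce all the claimed identities from Lemma~\ref{cyclo2n} by one algebra substitution. The key observation is that $x^{2^nd}-1=(x^d)^{2^n}-1$, so that if $\sigma\colon\mathbb{F}_q[x]\to\mathbb{F}_q[x]$ denotes the $\mathbb{F}_q$-algebra endomorphism determined by $\sigma(x)=x^d$, then $x^{2^nd}-1=\sigma\!\left(x^{2^n}-1\right)$. Since $\sigma$ is a ring homomorphism it carries products to products, so applying $\sigma$ termwise to the two cases of Lemma~\ref{cyclo2n} gives at once, for $1\le n\le s$,
\[
x^{2^nd}-1=(x^d-1)\prod_{k=1}^{n}\Phi_{2^k}(x^d),
\]
and, for $n>s\ge1$, using $\sigma(x^{2^r})=x^{2^rd}$,
\[
x^{2^nd}-1=(x^d-1)\prod_{k=1}^{s}\Phi_{2^k}(x^d)\prod_{r=1}^{n-s}\Phi_{2^{s}}\!\left(x^{2^rd}\right).
\]

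For the finer factorisations of the individual pieces, apply $\sigma$ once more to the two product expansions recorded at the end of Lemma~\ref{cyclo2n}: from $\Phi_{2^k}(x)=\prod_{1\le i\le 2^{k-1}}\!\left(x-\alpha_{2^k}^{2i-1}\right)$ one obtains $\Phi_{2^k}(x^d)=\prod_{1\le i\le 2^{k-1}}\!\left(x^d-\alpha_{2^k}^{2i-1}\right)$, and from $\Phi_{2^s}(x^{2^r})=\prod_{1\le i\le 2^{s-1}}\!\left(x^{2^r}-\alpha_{2^s}^{2i-1}\right)$ one obtains $\Phi_{2^s}(x^{2^rd})=\prod_{1\le i\le 2^{s-1}}\!\left(x^{2^rd}-\alpha_{2^s}^{2i-1}\right)$. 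The latter is valid for every integer $r\ge0$, the value $r=0$ recovering precisely the $k=s$ term $\Phi_{2^s}(x^d)$ of the first product; this is what the stated range $0\le r\le n-s$ encodes.

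There is no genuine obstacle here: the only points requiring attention are bookkeeping ones. One must confirm that $\sigma$ respects the finite products appearing in Lemma~\ref{cyclo2n} --- immediate, as $\sigma$ is a ring homomorphism --- and that the index ranges survive the substitution, in particular that the constraint $1\le k\le n\le s$ in the first case is subsumed by ``$1\le k\le s$'', and that the $r=0$ instance of the second factored form coincides with the top term $k=s$ of $\prod_{k=1}^{s}\Phi_{2^k}(x^d)$. Alternatively, one could repeat the proof of Lemma~\ref{cyclo2n} verbatim with $x$ replaced throughout by $x^d$, separating cases according to whether $2^n$ does or does not exceed the $2$-part $2^s$ of $q-1$ and invoking the identity $\Phi_{2^{s+r}}(x)=\Phi_{2^s}(x^{2^r})$ of Lemma~\ref{cyclotomic}(ii) for the factors beyond $\Phi_{2^s}$; but the substitution argument is shorter and self-contained.
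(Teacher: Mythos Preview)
Your proposal is correct and matches the paper's approach exactly: the paper offers no separate proof but simply records that Lemma~\ref{fact2ndcyclo} follows immediately from Lemma~\ref{cyclo2n}, and your substitution $x\mapsto x^d$ is precisely that deduction. (Incidentally, your second display correctly has $\Phi_{2^k}(x^d)$ where the printed statement has $\Phi_{2^k}(x)$; the latter is evidently a typo, as the paper's own subsequent use of the lemma in the proof of Theorem~\ref{fact2t+1} confirms.)
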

\begin{lem} \label{factm}For any integer $m$ relatively prime with $q$, let $b$ be a primitive $m$th root of unity in some extension field  of $\mathbb{F}_q$. Then  \begin{center}$x^{m}-1=\prod_{j=0}^{m-1}(x-b^j)$.  \end{center}Further, if $c\in\mathbb{F}_q^*$ such that $c=a^m$ for some $a\in \mathbb{F}_q^*$, then  \begin{center}$\displaystyle{x^{m}-c=\prod_{j=0}^{m-1}(x-ab^j)}$.\end{center} \end{lem}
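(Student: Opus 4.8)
The plan is to prove both identities by the standard ``count the roots'' argument: a monic polynomial of degree $m$ that has $m$ distinct roots in an extension field must coincide with the product of the associated linear factors.

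For the first identity, I would first note that every power $b^j$ with $0\le j\le m-1$ is a root of $x^m-1$, since $(b^j)^m=(b^m)^j=1$. The essential point is that these $m$ elements are pairwise distinct: if $b^{j}=b^{j'}$ with $0\le j'<j\le m-1$, then $b^{j-j'}=1$ with $0<j-j'<m$, contradicting the fact that $b$ is a \emph{primitive} $m$th root of unity, i.e.\ has multiplicative order exactly $m$. Hence $x^m-1$ has the $m$ distinct roots $b^0,b^1,\dots,b^{m-1}$ in the extension field containing $b$; since $x^m-1$ is monic of degree $m$, we get $x^m-1=\prod_{j=0}^{m-1}(x-b^j)$. (Implicitly this also reflects that $x^m-1$ is separable, which is where the hypothesis $\gcd(m,q)=1$ is really used, namely to guarantee that an $m$th root of unity of order exactly $m$ exists.)

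For the second identity, assume $c=a^m$ with $a\in\mathbb{F}_q^*$. I would argue directly: for each $j$ we have $(ab^j)^m=a^m(b^m)^j=c$, so each $ab^j$ is a root of $x^m-c$; and the elements $ab^0,\dots,ab^{m-1}$ are distinct because $a\ne0$ and the $b^j$ are distinct. Thus $x^m-c$ is a monic degree-$m$ polynomial with the $m$ distinct roots $ab^j$, giving $x^m-c=\prod_{j=0}^{m-1}(x-ab^j)$. Alternatively, from the first part, $x^m-c=a^m\bigl((x/a)^m-1\bigr)=a^m\prod_{j=0}^{m-1}\bigl(x/a-b^j\bigr)=\prod_{j=0}^{m-1}(x-ab^j)$.

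There is essentially no serious obstacle here; the only point that needs care is to invoke the full strength of the hypothesis that $b$ is a \emph{primitive} $m$th root of unity (order exactly $m$, not merely a root of $x^m-1$) so that the listed roots are genuinely distinct, together with $\gcd(m,q)=1$ ensuring such a $b$ exists in the first place.
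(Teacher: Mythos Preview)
Your argument is correct and is exactly the standard one: the $m$ powers $b^0,\dots,b^{m-1}$ are distinct because $b$ has order $m$, they are all roots of the monic degree-$m$ polynomial $x^m-1$, hence must exhaust its roots; the second identity follows either by the same counting or by the substitution $x\mapsto x/a$ as you indicate. The paper itself states this lemma without proof, treating it as elementary, so there is nothing to compare against.
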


\section{Factorization of $x^{2^nd}-1$ over $\mathbb{F}_q$, when $q\equiv1\pmod {2d}$}
In this section, we reformulate the factorization of $x^{2^nd}-1$ into irreducible factors over $\mathbb{F}_q$ recursively when $d$ is an odd divisor of $q-1$.  In view of Lemma \ref{birr},  each factor of $\Phi_{2^k}(x^d)$ and   $\Phi_{2^s}(x^{2^rd})$ is reducible over $\mathbb{F}_q$.  Thus, in order  to determine the complete factorization of  $x^{2^nd}-1$ over $\mathbb{F}_q$, one needs  to split the decomposable cyclotomic polynomials $\Phi_{2^k}(x^{d})$ for $1\le k\le s$ and $\Phi_{2^s}(x^{2^rd})$ for $1\le r\le n-s$ into irreducible factors over $\mathbb{F}_q$.
\begin{thm}\label{cyclo2rd} Let $d$ be an odd integer such that $q\equiv1\pmod {2^kd}$, where $1\le k\le s$.   Let  $\gamma$ be a primitive $d$th root of unity in $\mathbb{F}_q^*$. Then, for any integer $r\ge0$, the complete factorization of $\Phi_{2^k}(x^{2^rd})$ is  $$\Phi_{2^k}(x^{2^rd})=\Phi_{2^k}(x^{2^r})\prod_{\substack{1\le i\le 2^{k-1}\\1\le j\le d-1}}(x^{2^r}-\alpha_{2^k}^{2i-1}\gamma^j),$$ where   \begin{eqnarray*}
\Phi_{2^k}(x^{2^r})=\Phi_{2^{k+r}}(x)=\left\{ \begin{array}{lcl}
\displaystyle{\prod_{i=1}^{2^{k+r-1}}(x-
\alpha_{2^{k+r}}^{2i-1})} &\mbox{if} &k+r\le s \\ \displaystyle{\prod_{i=1}^{2^{s-1}}(x-
\alpha_{2^{k+r-s}}^{2i-1})} &\mbox{if} &k+r>s.
\end{array}\right.\end{eqnarray*} 
  \end{thm}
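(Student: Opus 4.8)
The plan is to obtain the displayed identity in two stages: first split the outer cyclotomic polynomial $\Phi_{2^k}$, then split each resulting binomial by Lemma~\ref{factm}, and finally recognise the leftover factor as $\Phi_{2^{k+r}}(x)$ via Lemma~\ref{cyclotomic}(ii). To set up, note that $q\equiv 1\pmod{2^kd}$ means $2^kd\mid q-1$, so the cyclic group $\mathbb{F}_q^\ast$ contains a primitive $2^k$th root of unity $\alpha_{2^k}$ and a primitive $d$th root of unity $\gamma$; in particular the full set $\{\alpha_{2^k}^{2i-1}:1\le i\le 2^{k-1}\}$ of primitive $2^k$th roots of unity already lies in $\mathbb{F}_q$. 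Hence the elementary splitting $\Phi_{2^k}(y)=\prod_{i=1}^{2^{k-1}}(y-\alpha_{2^k}^{2i-1})$ holds in $\mathbb{F}_q[y]$, and substituting $y=x^{2^rd}$ gives $\Phi_{2^k}(x^{2^rd})=\prod_{i=1}^{2^{k-1}}\bigl(x^{2^rd}-\alpha_{2^k}^{2i-1}\bigr)$.

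The heart of the argument is the factorisation of each binomial $x^{2^rd}-\alpha_{2^k}^{2i-1}=(x^{2^r})^{d}-\alpha_{2^k}^{2i-1}$. Since $\alpha_{2^k}^{2i-1}$ has order $2^k$, coprime to $d$, it is a $d$th power in $\mathbb{F}_q^\ast$: picking $d^\ast$ with $dd^\ast\equiv 1\pmod{2^k}$, the element $c_i:=\alpha_{2^k}^{(2i-1)d^\ast}$ has order $2^k$ and satisfies $c_i^d=\alpha_{2^k}^{2i-1}$. Applying Lemma~\ref{factm} with the indeterminate $x$ replaced by $x^{2^r}$, with $m=d$, $b=\gamma$, $a=c_i$, gives $x^{2^rd}-\alpha_{2^k}^{2i-1}=\prod_{j=0}^{d-1}\bigl(x^{2^r}-c_i\gamma^{j}\bigr)$. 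The one genuine subtlety --- which I expect to be the crux --- is the re-indexing of the outer product: as $i$ ranges over $1,\dots,2^{k-1}$ the exponents $2i-1$ range over all odd residues modulo $2^k$, and multiplication by the unit $d^\ast$ merely permutes these, so $\{c_i:1\le i\le 2^{k-1}\}$ is again the complete set $\{\alpha_{2^k}^{2i-1}:1\le i\le 2^{k-1}\}$. Relabelling the outer product by this permutation converts $\prod_{i=1}^{2^{k-1}}\prod_{j=0}^{d-1}(x^{2^r}-c_i\gamma^{j})$ into $\prod_{i=1}^{2^{k-1}}\prod_{j=0}^{d-1}(x^{2^r}-\alpha_{2^k}^{2i-1}\gamma^{j})$, and isolating the $j=0$ block --- which is $\prod_{i=1}^{2^{k-1}}(x^{2^r}-\alpha_{2^k}^{2i-1})=\Phi_{2^k}(x^{2^r})$, once more by the elementary splitting with $y=x^{2^r}$ --- yields precisely the asserted formula.

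Finally I would expand $\Phi_{2^k}(x^{2^r})$. By Lemma~\ref{cyclotomic}(ii) it equals $\Phi_{2^{k+r}}(x)$. If $k+r\le s$ then $2^{k+r}\mid q-1$, so all primitive $2^{k+r}$th roots of unity lie in $\mathbb{F}_q$ and $\Phi_{2^{k+r}}(x)=\prod_{i=1}^{2^{k+r-1}}(x-\alpha_{2^{k+r}}^{2i-1})$; if $k+r>s$, Lemma~\ref{cyclotomic}(ii) instead gives $\Phi_{2^{k+r}}(x)=\Phi_{2^{s}}(x^{2^{k+r-s}})$, and splitting $\Phi_{2^s}$ over $\mathbb{F}_q$ (legitimate since $2^s\mid q-1$) yields the stated product of $2^{s-1}$ binomials. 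Apart from the permutation observation, every step is a direct appeal to Lemma~\ref{factm}, Lemma~\ref{cyclotomic}, and the cyclicity of $\mathbb{F}_q^\ast$.
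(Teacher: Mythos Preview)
Your proof is correct and follows essentially the same route as the paper. The only cosmetic difference is the placement of the re-indexing: the paper first rewrites $\prod_{i}(x^{2^rd}-\alpha_{2^k}^{2i-1})=\prod_{i}\bigl((x^{2^r})^{d}-\alpha_{2^k}^{d(2i-1)}\bigr)$ (using that multiplication by $d$ permutes the odd residues mod $2^k$) and \emph{then} applies Lemma~\ref{factm} with $a=\alpha_{2^k}^{2i-1}$, whereas you apply Lemma~\ref{factm} first with $a=c_i=\alpha_{2^k}^{(2i-1)d^\ast}$ and permute afterwards by $d^\ast$. Both hinge on the same observation, and you are in fact more explicit about it than the paper; the paper also omits the justification of the displayed expression for $\Phi_{2^k}(x^{2^r})$ that you supply via Lemma~\ref{cyclotomic}(ii).
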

\begin{proof}For any integer $r\ge0$ and $1\le k\le s$,  observe that\begin{eqnarray*}\Phi_{2^k}(x^{2^rd})=\prod_{\substack{1\le i\le 2^{k-1}}}\big(x^{2^rd}-\alpha_{2^k}^{2i-1}\big)=\prod_{\substack{1\le i\le 2^{k-1}}}\big((x^{2^r})^{d}-\alpha_{2^k}^{d(2i-1)}\big).\end{eqnarray*} Let  $\gamma$ be a primitive $d$th root of unity in $\mathbb{F}_q^*$.  Then, by Lemma \ref{factm}    \begin{eqnarray*}\Phi_{2^k}(x^{2^rd})&=&\prod_{\substack{1\le i\le 2^{k-1}\\0\le j\le d-1}}(x^{2^r}-\alpha_{2^k}^{2i-1}\gamma^j)\\&=&\Phi_{2^k}(x^{2^r})\prod_{\substack{1\le i\le 2^{k-1}\\1\le j\le d-1}}(x^{2^r}-\alpha_{2^k}^{2i-1}\gamma^j).\end{eqnarray*} This completes the proof. \end{proof}
 \begin{thm}\label{fact2nd4k+1} Let $d$ be any odd integer and $q\equiv 1\pmod  {2d}$. Then, for any integer $n\ge1$, the factorization of $x^{2^nd}-1$  over $\mathbb{F}_q$  is  given as: 
\begin{eqnarray*}
x^{2^nd}-1=\left\{ \begin{array}{lcl}
\displaystyle{\prod_{j=0}^{d-1}
\bigg((x-\gamma^j)\prod_{\substack{i=1\\1\le k\le n}}^{ 2^{k-1}}(x-\alpha_{2^k}^{2i-1}\gamma^j)\bigg)} &\mbox{if} &n\le s \\ \displaystyle{\prod_{j=0}^{d-1}\bigg((x-\gamma^j)
\prod_{\substack{i=1\\1\le k\le s}}^{2^{k-1}}(x-\alpha_{2^k}^{2i-1}\gamma^j)\prod_{\substack{i=1\\1\le r\le n-s}}^{2^{s-1}}(x^{2^r}-\alpha_{2^s}^{2i-1}\gamma^j)\bigg)} &\mbox{if} &n>s
\end{array}\right.\end{eqnarray*} 
Further, if  $n>s\ge2$, the factorization  $x^{2^nd}-1$ over $\mathbb{F}_q$ has $2^{s-1}(n-s+2)d$ irreducible factors, however if $q\equiv3\pmod 4$, all nonlinear factors in the factorization are reducible over $\mathbb{F}_q$ except binomials $x^2+\gamma^j$ for all $0\le j\le d-1$.\end{thm}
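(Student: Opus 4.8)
The plan is to read off the displayed factorization from the cyclotomic decomposition of $x^{2^nd}-1$ in Lemma~\ref{fact2ndcyclo}, after splitting each decomposable cyclotomic factor by Theorem~\ref{cyclo2rd}, and then to settle the counting and (ir)reducibility assertions with Lemma~\ref{birr}.

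First I would note that $q\equiv1\pmod{2d}$ together with $q=2^st+1$ and $d$ odd forces $d\mid t$, hence $q\equiv1\pmod{2^kd}$ for every $1\le k\le s$, so Theorem~\ref{cyclo2rd} is available for all relevant $k$. Taking $r=0$ in Theorem~\ref{cyclo2rd} and merging the $j=0$ term with the product form of $\Phi_{2^k}(x)$ gives $\Phi_{2^k}(x^d)=\prod_{i=1}^{2^{k-1}}\prod_{j=0}^{d-1}(x-\alpha_{2^k}^{2i-1}\gamma^j)$ for $1\le k\le s$; taking $k=s$ and $r\ge1$, together with the expression for $\Phi_{2^s}(x^{2^r})$ in Lemma~\ref{cyclo2n}, gives $\Phi_{2^s}(x^{2^rd})=\prod_{i=1}^{2^{s-1}}\prod_{j=0}^{d-1}(x^{2^r}-\alpha_{2^s}^{2i-1}\gamma^j)$; and $x^d-1=\prod_{j=0}^{d-1}(x-\gamma^j)$ by Lemma~\ref{factm}. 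Substituting these into the two branches of Lemma~\ref{fact2ndcyclo} and collecting, for each fixed $j$, all the factors carrying the twist $\gamma^j$, yields exactly the two displayed formulas. When $n\le s$ every factor is linear; since $\gcd(2^nd,q)=1$ the polynomial $x^{2^nd}-1$ is squarefree, so the $2^nd$ listed linear factors are distinct and this is already the irreducible factorization.

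Next, for $n>s$ I would prove that each binomial $x^{2^r}-\alpha_{2^s}^{2i-1}\gamma^j$ with $1\le r\le n-s$ is irreducible over $\mathbb{F}_q$ whenever $s\ge2$, by checking the hypotheses of Lemma~\ref{birr} with $l=2^r$ and $a=\alpha_{2^s}^{2i-1}\gamma^j$. The element $a$, being the product of an element of order $2^s$ and one of odd order $d/\gcd(j,d)$, has order $k=2^s\,d/\gcd(j,d)$; the only prime dividing $l$ is $2$, which divides $k$, while $(q-1)/k=(t/d)\gcd(j,d)$ is odd because $d\mid t$, so condition~(i) holds; condition~(ii) holds since $s\ge2$ gives $4\mid 2^s\mid q-1$. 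As the linear factors are irreducible as well and the product of all listed factors is the squarefree polynomial $x^{2^nd}-1$, uniqueness of factorization shows the displayed factorization is the irreducible one. A direct count then gives, for each of the $d$ values of $j$, exactly $1+(2^s-1)+(n-s)2^{s-1}=2^{s-1}(n-s+2)$ irreducible factors, hence $2^{s-1}(n-s+2)d$ in total.

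Finally, when $q\equiv3\pmod4$ we have $s=1$ and $\alpha_2=-1$, so for $n>1$ the factorization reads $\prod_{j=0}^{d-1}\bigl((x-\gamma^j)(x+\gamma^j)\prod_{r=1}^{n-1}(x^{2^r}+\gamma^j)\bigr)$, whose nonlinear factors are precisely the binomials $x^{2^r}+\gamma^j$ with $1\le r\le n-1$. Applying Lemma~\ref{birr} once more with $a=-\gamma^j$ (of order $2d/\gcd(j,d)$): for $r=1$ condition~(ii) is vacuous and condition~(i) holds as above, so $x^2+\gamma^j$ is irreducible, whereas for $r\ge2$ condition~(ii) fails because $4\mid 2^r$ but $4\nmid q-1$, so $x^{2^r}+\gamma^j$ is reducible. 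I expect the only delicate point to be the index bookkeeping when regrouping the triple product by $j$ and keeping straight which roots of unity and which ranges occur in the $n\le s$ versus the $n>s$ branch; the order computations feeding Lemma~\ref{birr} are routine once one records that $d\mid t$ and that $t/d$ is odd.
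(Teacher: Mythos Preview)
Your proof is correct and follows essentially the same route as the paper: apply Theorem~\ref{cyclo2rd} (with $r=0$ for the $\Phi_{2^k}(x^d)$ pieces and with $k=s$ for the $\Phi_{2^s}(x^{2^rd})$ pieces), feed the results into Lemma~\ref{fact2ndcyclo}, and then invoke Lemma~\ref{birr} for the irreducibility and reducibility claims. You supply more detail than the paper does---in particular the explicit verification that $d\mid t$, the order computation for $\alpha_{2^s}^{2i-1}\gamma^j$, and the factor count $1+(2^s-1)+(n-s)2^{s-1}=2^{s-1}(n-s+2)$---but the structure of the argument is the same.
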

\begin{proof} In  Theorem \ref{cyclo2rd},  on substituting  $r=0$ and $k=s$ in the polynomial  $\Phi_{2^k}(x^{2^rd})$, we obtain $\Phi_{2^k}(x^{d})=\Phi_{2^k}(x)\prod_{\substack{1\le i\le 2^{k-1}\\1\le j\le d-1}}(x-\alpha_{2^k}^{2i-1}\gamma^j)$  and $\Phi_{2^s}(x^{2^rd})=\Phi_{2^s}(x^{2^r})\prod_{\substack{1\le i\le 2^{s-1}\\1\le j\le d-1}}(x^{2^r}-\alpha_{2^s}^{2i-1}\gamma^j)$ respectively. 
 The result now follows from  Lemma \ref{fact2ndcyclo}. Further,  when $n>s\ge2$, the irreducibility of its nonlinear factors can be proved by Lemma \ref{birr}. For  $q\equiv3\pmod 4$ i.e.  $s=1$, the factorization of $x^{2^nd}-1$ over $\mathbb{F}_q$ reduces to  \begin{eqnarray*} x^{2^nd}-1=\prod_{j=0}^{d-1}\bigg((x-\gamma^j)(x+b^j)\prod_{\substack{1\le r\le n-1}}(x^{2^r}+\gamma^j)\bigg). \end{eqnarray*} By Lemma \ref{birr}, factors $x^{2^r}+\gamma^j$ are reducible over $\mathbb{F}_q$ for every $r\ge2$.\end{proof}
Consider the case   $q\equiv3\pmod 4$. Let $Q=q^2=2^uv+1$,  $u\ge3$ and $2\nmid v$. Let $\beta_{2^k}$ be a primitive $2^k$th root of unity in $\mathbb{F}_Q^*$. Note that  $\beta_{2^k}:=\alpha_{2^k}$ when $\beta_{2^k}\in\mathbb{F}_q$.
\begin{itemize}\item[(i)] A quadratic character $\chi$ on $\langle\beta_{2^u}\rangle\subseteq \mathbb{F}_Q^*$ is defined as \begin{center}
 $\chi(\beta_{2^k})=\beta_{2^k}^{q+1}=\left\{\begin{array}{ll}1 &\mbox{ if  $0\leq k<u$}\\-1 &\mbox{ if  $k=u$}\end{array} \right.$\end{center}
\item[(ii)] A trace is a mapping $\mathbb{T}:\mathbb{F}_Q\rightarrow\mathbb{F}_q$ defined as  $\mathbb{T}(x)=x+x^q $ for all $x\in\mathbb{F}_Q$. Further, for any positive integer $r\ge1$, we define the $r$th trace $\mathbb{T}_r:\mathbb{F}_Q\rightarrow\mathbb{F}_q$ such that  $\mathbb{T}_r(x)=\mathbb{T}(x^r)$.\end{itemize}
\begin{lem} [Lemma 2.6 in \cite{singh}] \label{cyclo2k}  For any fixed $3\le k\le u$. The cyclotomic polynomial $\Phi_{2^k}(x)=x^{2^{k-1}}+1$ over $\mathbb{F}_q$ can be splits into irreducible factors as  $$\Phi_{2^k}(x)=\prod_{1\le i\le 2^{k-3}} (x^2\pm\mathbb{T}(\beta_k^{2i-1})x+\chi(\beta_k)).$$\end{lem}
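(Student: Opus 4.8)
The plan is to read off the factorization directly from the behaviour of the $2^k$-th roots of unity in $\mathbb{F}_Q=\mathbb{F}_{q^2}$, using only the elementary fact recalled in Section~2: if $\gcd(n,q)=1$, then $\Phi_n(x)$ splits over $\mathbb{F}_q$ into $\phi(n)/e$ distinct monic irreducible factors of degree $e$, where $e$ is the least positive integer with $q^e\equiv1\pmod n$.

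First I would compute $e$ for $n=2^k$ with $3\le k\le u$. Since $2^u\mid q^2-1$ and $k\le u$, we have $q^2\equiv1\pmod{2^k}$, so $e\mid 2$; and since $q\equiv3\pmod4$ forces $4\nmid q-1$, we get $2^k\nmid q-1$ for $k\ge2$, hence $e=2$. Therefore $\Phi_{2^k}(x)=x^{2^{k-1}}+1$ is a product of exactly $\phi(2^k)/2=2^{k-2}$ distinct monic irreducible quadratics over $\mathbb{F}_q$, the distinctness coming from separability of $\Phi_{2^k}$. If $g$ is one such factor and $\zeta$ a root of $g$, then $\zeta$ has order $2^k$, its $\mathbb{F}_q$-conjugate is $\zeta^{q}$, and
$$g(x)=(x-\zeta)(x-\zeta^{q})=x^{2}-\mathbb{T}(\zeta)\,x+\zeta^{\,q+1};$$
here $\mathbb{T}(\zeta)\in\mathbb{F}_q$ since $\mathbb{T}(\zeta)^{q}=\zeta^{q}+\zeta^{q^{2}}=\mathbb{T}(\zeta)$, and writing $\zeta=\beta_k^{\,m}$ with $m$ odd gives $\zeta^{\,q+1}=(\beta_k^{\,q+1})^{m}=\chi(\beta_k)^{m}=\chi(\beta_k)$, because $\chi(\beta_k)=\pm1$. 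Thus every irreducible factor of $\Phi_{2^k}(x)$ has the shape $x^{2}-\mathbb{T}(\zeta)x+\chi(\beta_k)$ for a suitable primitive $2^k$-th root $\zeta$.

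Next I would exploit the symmetry $\zeta\mapsto-\zeta$. Because $\zeta^{2^{k-1}}=-1$, the element $-\zeta=\zeta^{\,1+2^{k-1}}$ again has order $2^k$, and $\zeta\mapsto-\zeta$ commutes with Frobenius; hence it permutes the $2^{k-2}$ irreducible factors, carrying $x^{2}-\mathbb{T}(\zeta)x+\chi(\beta_k)$ to $x^{2}+\mathbb{T}(\zeta)x+\chi(\beta_k)$. This permutation is a fixed-point-free involution: indeed $\mathbb{T}(\zeta)\ne0$, for otherwise $\zeta^{q-1}=-1$, whereas $\zeta^{q-1}$ has order $2^{k}/\gcd(2^{k},q-1)=2^{k-1}\ge4$ for $k\ge3$ (again using $4\nmid q-1$, so $2\parallel q-1$). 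Consequently the $2^{k-2}$ factors split into $2^{k-3}$ pairs $\{\,x^{2}-cx+\chi(\beta_k),\ x^{2}+cx+\chi(\beta_k)\,\}$ with $c\ne0$, which yields the claimed formula once one checks that a full set of representatives for these pairs is given by the roots $\beta_k^{\,2i-1}$, $1\le i\le 2^{k-3}$, so that $c=\mathbb{T}(\beta_k^{\,2i-1})$ there.

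The main obstacle is precisely this last bookkeeping step. Translating to exponents, the $\phi(2^k)$ roots $\beta_k^{\,j}$ $(j$ odd$)$ are permuted by multiplication by $q$ (Frobenius) and by $1+2^{k-1}$ (the exponent effect of $\zeta\mapsto-\zeta$), and the pairs above are exactly the orbits of $H=\langle q,\,1+2^{k-1}\rangle\le(\mathbb{Z}/2^{k}\mathbb{Z})^{*}$; one must show $\{1,3,\dots,2^{k-2}-1\}$ is a transversal of $H$. This is where the hypothesis enters quantitatively: from $2^{u}\parallel q^{2}-1$ and $q\equiv3\pmod4$ one gets $2\parallel q-1$, hence $2^{u-1}\parallel q+1$, so $q\equiv-1\pmod{2^{k-1}}$ for every $k\le u$; since $\langle 1+2^{k-1}\rangle$ is the kernel of reduction $(\mathbb{Z}/2^{k}\mathbb{Z})^{*}\to(\mathbb{Z}/2^{k-1}\mathbb{Z})^{*}$, the image of $H$ there is $\langle-1\rangle$, of which $\{1,3,\dots,2^{k-2}-1\}$ is visibly a transversal. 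Assembling the pieces gives
$$\Phi_{2^k}(x)=\prod_{1\le i\le 2^{k-3}}\bigl(x^{2}-\mathbb{T}(\beta_k^{\,2i-1})x+\chi(\beta_k)\bigr)\bigl(x^{2}+\mathbb{T}(\beta_k^{\,2i-1})x+\chi(\beta_k)\bigr),$$
which is the asserted factorization.
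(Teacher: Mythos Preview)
Your proof is correct. Note, however, that the paper does not supply its own proof of this lemma---it is quoted from \cite{singh}. The nearest thing in the paper is the proof of Theorem~\ref{cyclotri}, whose $j=0$ (equivalently $d=1$) case specializes to Lemma~\ref{cyclo2k}. That argument proceeds differently from yours: it writes $\Phi_{2^{k}}(x)=\Phi_{2^{k-2}}(x^{4})$, factors each quartic $x^{4}-\beta_{2^{k-2}}^{2i-1}$ over $\mathbb{F}_{Q}$ as $(x-\beta_{2^{k}}^{2i-1})(x+\beta_{2^{k}}^{2i-1})(x^{2}+\beta_{2^{k-1}}^{2i-1})$, and then uses the explicit index permutation $i\mapsto 2^{k-3}-i+1$ together with the identity $\beta_{2^{k}}^{2q(2i-1)}=\beta_{2^{k-1}}^{-(2i-1)}$ to recombine the linear factors into Galois-conjugate pairs, yielding the minimal polynomials $x^{2}\pm\mathbb{T}(\beta_{2^{k}}^{2i-1})x+\chi(\beta_{2^{k}})$. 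Your route is more structural: you identify the irreducible quadratic factors with orbits of $H=\langle q,\,1+2^{k-1}\rangle$ on $(\mathbb{Z}/2^{k}\mathbb{Z})^{*}$, show via the reduction map to $(\mathbb{Z}/2^{k-1}\mathbb{Z})^{*}$ that $H$ is the full preimage of $\langle-1\rangle$ (using $q\equiv-1\pmod{2^{k-1}}$), and read off $\{1,3,\dots,2^{k-2}-1\}$ as a transversal. The paper's computation is hands-on and directly produces the index set; your coset argument explains conceptually \emph{why} that particular set of exponents gives a complete system of representatives and would adapt more readily to analogous situations.
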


\begin{lem}  [Theorem 3.3 in \cite{singh}] \label{trace} If $q\equiv3\pmod4$ and $3\le k\le u$. Then there are $2^{k-2}$ distinct traces $\mathbb{T}(\beta_{2^k}^{2i-1})$ such that the first $2^{k-3}$ traces are given by the linear recursive sequence  $\mathbb{T}_{2i-1}(\beta_{2^k})=\mathbb{T}(\beta_{2^k})
\mathbb{T}(\beta_{2^{k-1}}^{i-1})
-\chi(\beta_{2^k})\mathbb{T}(\beta_{2^k}^{2i-3})$ and the rest of $2^{k-3}$ are $-\mathbb{T}(\beta_{2^k}^{2i-1})$. The initial terms of the sequence are $\mathbb{T}(\beta_4)=0$ and for $3\le k\le u$, $\mathbb{T}(\beta_{2^k})=(\mathbb{T}(\beta_{2^{k-1}})+2\chi(\beta_{2^k}))^{(t+1)/2}$.\end{lem}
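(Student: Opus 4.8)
The plan is to fix, once and for all, a compatible system of roots $\beta_2,\beta_4,\dots,\beta_{2^u}$ in $\mathbb{F}_Q$ with $\beta_{2^{k-1}}=\beta_{2^k}^{2}$, and to extract every assertion of Lemma \ref{trace} from the defining relation $\chi(\beta_{2^k})=\beta_{2^k}^{\,q+1}$ together with the action of $x\mapsto x^{q}$ on $\langle\beta_{2^u}\rangle$. The heart of the matter is the three-term identity
\[
\mathbb{T}(\beta^{a+1})=\mathbb{T}(\beta)\,\mathbb{T}(\beta^{a})-\chi(\beta)\,\mathbb{T}(\beta^{a-1}),\qquad \beta=\beta_{2^k},\ 3\le k\le u,
\]
valid for every integer $a$. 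First I would prove this by a one-line expansion: the product $(\beta^{a}+\beta^{qa})(\beta+\beta^{q})$ has the four terms $\beta^{a+1},\beta^{q(a+1)},\beta^{a+q},\beta^{qa+1}$; the first two sum to $\mathbb{T}(\beta^{a+1})$, while $\beta^{a+q}+\beta^{qa+1}=\beta^{q+1}(\beta^{a-1}+\beta^{q(a-1)})=\chi(\beta)\,\mathbb{T}(\beta^{a-1})$, and rearranging gives the identity. Putting $a=2i-2$ and using $\beta_{2^k}^{2(i-1)}=\beta_{2^{k-1}}^{\,i-1}$ turns it into the stated recursion $\mathbb{T}_{2i-1}(\beta_{2^k})=\mathbb{T}(\beta_{2^k})\mathbb{T}(\beta_{2^{k-1}}^{\,i-1})-\chi(\beta_{2^k})\mathbb{T}(\beta_{2^k}^{2i-3})$, a recursion in $i$ whose coefficients are themselves traces from levels $\le k$.

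Next I would establish the seed values that start these recursions. Since $q\equiv3\pmod4$ we have $\beta_4^{q}=\beta_4^{3}=-\beta_4$, so $\mathbb{T}(\beta_4)=\beta_4-\beta_4=0$, which is the bottom of the descent. For the seed $\mathbb{T}(\beta_{2^k})$ with $3\le k\le u$, squaring and using $\beta_{2^k}^{2}=\beta_{2^{k-1}}$ and $\beta_{2^k}^{\,q+1}=\chi(\beta_{2^k})$ gives $\mathbb{T}(\beta_{2^k})^{2}=\mathbb{T}(\beta_{2^{k-1}})+2\chi(\beta_{2^k})$. Because $q\equiv3\pmod4$ forces $s=1$, i.e.\ $q=2t+1$, the exponent $(t+1)/2$ equals $(q+1)/4$, and raising a square of $\mathbb{F}_q^{*}$ to that power returns its unique quadratic-residue square root; hence $\mathbb{T}(\beta_{2^k})=(\mathbb{T}(\beta_{2^{k-1}})+2\chi(\beta_{2^k}))^{(t+1)/2}$ once the sign is normalized by choosing $\beta_{2^k}$ (rather than $-\beta_{2^k}$, which squares to the same value) so that $\mathbb{T}(\beta_{2^k})$ is a square --- legitimate because $-1$ is a nonsquare in $\mathbb{F}_q$.

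For the counting statement I would lean on Lemma \ref{cyclo2k}: it exhibits $\Phi_{2^k}$ over $\mathbb{F}_q$ as the product of the $2^{k-2}$ quadratics $x^{2}\pm\mathbb{T}(\beta_{2^k}^{2i-1})x+\chi(\beta_{2^k})$, $1\le i\le 2^{k-3}$, all sharing the constant term $\chi(\beta_{2^k})$; since $\Phi_{2^k}$ is squarefree over $\mathbb{F}_q$, these $2^{k-2}$ quadratics are distinct, and equal constant terms then force the $2^{k-2}$ elements $\pm\mathbb{T}(\beta_{2^k}^{2i-1})$ to be pairwise distinct. This is exactly the claim that there are $2^{k-2}$ distinct traces, namely the $2^{k-3}$ values produced by the recursion together with their $2^{k-3}$ negatives; and that these negatives really do occur as traces follows from $\mathbb{T}(-x)=-\mathbb{T}(x)$ and the fact that $-\beta_{2^k}^{2i-1}$ is again a primitive $2^k$th root. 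I would also check that no trace vanishes when $k\ge3$, so the $\pm$-pairs do not collapse: $\mathbb{T}(\beta^{a})=0$ with $a$ odd would give $(q-1)a\equiv2^{k-1}\pmod{2^k}$, impossible since $v_2((q-1)a)=1<k-1$.

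The step I expect to be genuinely delicate --- and the only one, the rest being routine algebra or a $2$-adic estimate --- is the bookkeeping that the $2^{k-3}$ exponents $1,3,\dots,2^{k-2}-1$ fed to the recursion lie in pairwise distinct Frobenius orbits, and that every remaining primitive-root trace is $\pm$ one of the recursion outputs. Here the two regimes diverge and must be handled separately: for $k<u$ one has $q\equiv-1\pmod{2^k}$, so the Frobenius orbit of $\beta^{b}$ is $\{\beta^{b},\beta^{-b}\}$; for $k=u$ one has $\chi(\beta_{2^u})=-1$ and $q\equiv2^{u-1}-1\pmod{2^u}$, so the orbit is $\{\beta^{b},\beta^{2^{u-1}-b}\}$. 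In both cases a short argument on residues modulo $2^k$ should show that the small odd exponents $1,3,\dots,2^{k-2}-1$ each meet their orbit exactly once and that $\beta^{c+2^{k-1}}=-\beta^{c}$ sweeps out the negatives, which would complete the proof.
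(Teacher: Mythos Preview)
The paper does not prove Lemma~\ref{trace}; it is quoted without argument as Theorem~3.3 of \cite{singh}, so there is no in-paper proof to compare your proposal against. That said, your proposal is correct and essentially complete: the three-term trace identity follows from the one-line expansion you give, the seed computations are accurate (including the Tonelli-style identification $(t+1)/2=(q+1)/4$ and the sign normalization via the freedom $\beta_{2^k}\leftrightarrow-\beta_{2^k}$, legitimate since $-1$ is a nonsquare), the distinctness argument via squarefreeness of $\Phi_{2^k}$ together with Lemma~\ref{cyclo2k} is clean, and your $2$-adic valuation check that no trace vanishes for $k\ge3$ is correct. The orbit bookkeeping you flag as delicate is indeed the only step requiring care, and the case split you outline ($q\equiv-1\pmod{2^k}$ for $k<u$ versus $q\equiv2^{u-1}-1\pmod{2^u}$ for $k=u$) is exactly what is needed to finish.
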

The following result is a useful tool for proving  our next theorem. The empty product assumed to be 1.
\begin{thm}\label{cyclotri} Assume that $q\equiv3\pmod 4$ and $q\equiv1\pmod d$. Then $\Phi_{4}(x^{d})=x^{2d}+1=\prod_{0\le j\le d-1}(x^{2}+\gamma^{j})$ and for $3\le k\le u$,  the irreducible factorization  of decomposable cyclotomic polynomial $\Phi_{2^k}(x^{d})$   over $\mathbb{F}_q$  is  given by: $$\Phi_{2^k}(x^{d})=\Phi_{2^k}(x)\prod_{\substack{1\le i\le 2^{k-3}\\1\le j\le d-1}} (x^2\pm \gamma^j\mathbb{T}(\beta_{2^k}^{2i-1})x+\chi(\beta_{2^k})\gamma^{2j}). $$  Further, for any integer $r\ge1$ and $3\le k\le u$, the  factorization of decomposable cyclotomic polynomial $\Phi_{2^k}(x^{2^rd})$   over $\mathbb{F}_q$  is given by:   $$\Phi_{2^k}(x^{2^rd})=\Phi_{2^k}(x^{2^r})\prod_{\substack{1\le i\le 2^{k-3}\\1\le j\le d-1}} (x^{2^{r+1}}\pm \gamma^j\mathbb{T}(\beta_{2^k}^{2i-1})x^{2^r}+\chi(\beta_{2^k})\gamma^{2j}).$$ Furthermore, the decomposable polynomial $\Phi_{2^u}(x^{2^rd})$ is a product of $2^{u-2}$ irreducible trinomials over $\mathbb{F}_q$, while  the decomposable polynomial $\Phi_{2^k}(x^{2^rd})$, where $2\le k\le u-1$, is a product of $2^{k-2}$ reducible trinomials over $\mathbb{F}_q$. \end{thm}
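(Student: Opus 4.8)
The plan is to reduce the whole statement to the case $r=0$ and then build up from the factorization of a single plain cyclotomic polynomial, reading off irreducibility afterwards from the binomial/composition criteria of Lemmas~\ref{birr} and \ref{tirr}. Since $\Phi_{2^k}(x^{2^r})=\Phi_{2^{k+r}}(x)$ by Lemma~\ref{cyclotomic}, substituting $x\mapsto x^{2^r}$ carries the $r=0$ identities to the general ones, so it suffices to factor $\Phi_4(x^d)$ and $\Phi_{2^k}(x^d)$ for $3\le k\le u$, where $\gamma\in\mathbb{F}_q^{\ast}$ is the fixed primitive $d$th root of unity ($q\equiv 1\pmod d$). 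The case $k=2$ is immediate: $\Phi_4(x^d)=x^{2d}+1$, and since $d$ is odd the $d$ distinct elements $-\gamma^0,\dots,-\gamma^{d-1}$ are exactly the roots of $y^d+1$, so $y^d+1=\prod_{j=0}^{d-1}(y+\gamma^{j})$ (the instance $c=(-1)^d$ of Lemma~\ref{factm}); putting $y=x^2$ gives $\Phi_4(x^d)=\prod_{j=0}^{d-1}(x^2+\gamma^{j})$.

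For $3\le k\le u$ I would pass to the quadratic extension $\mathbb{F}_Q$: it contains a primitive $2^k$th root of unity $\beta_{2^k}$ and still contains $\gamma$, and $2^kd\mid Q-1=2^uv$ because $d$ is odd and $d\mid q-1\mid Q-1$. Hence Theorem~\ref{cyclo2rd}, read over the field $\mathbb{F}_Q$ (with $\beta_{2^k}$ in the role of $\alpha_{2^k}$), gives in $\mathbb{F}_Q[x]$
\[
\Phi_{2^k}(x^d)=\Phi_{2^k}(x)\prod_{\substack{1\le i\le 2^{k-1}\\ 1\le j\le d-1}}\bigl(x-\beta_{2^k}^{2i-1}\gamma^{j}\bigr).
\]
Next I would collect these linear factors into orbits of the Frobenius $\sigma\colon z\mapsto z^{q}$ of $\mathbb{F}_Q/\mathbb{F}_q$. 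As $\gamma\in\mathbb{F}_q$, $\sigma$ fixes $j$ and sends $\beta_{2^k}^{2i-1}\mapsto\beta_{2^k}^{q(2i-1)}$; for $k\ge 2$ this has no fixed point (a primitive $2^k$th root of unity has order $2^k\nmid q-1$), so for each fixed $j$ the $2^{k-1}$ factors pair into $2^{k-2}$ quadratics over $\mathbb{F}_q$,
\[
\bigl(x-\beta_{2^k}^{2i-1}\gamma^{j}\bigr)\bigl(x-\beta_{2^k}^{q(2i-1)}\gamma^{j}\bigr)=x^2-\gamma^{j}\mathbb{T}(\beta_{2^k}^{2i-1})\,x+\chi(\beta_{2^k})\gamma^{2j},
\]
using $\mathbb{T}(\beta_{2^k}^{2i-1})=\beta_{2^k}^{2i-1}+\beta_{2^k}^{q(2i-1)}$ and $\beta_{2^k}^{(q+1)(2i-1)}=\chi(\beta_{2^k})^{2i-1}=\chi(\beta_{2^k})$ (the last because $\chi(\beta_{2^k})=\pm 1$ and $2i-1$ is odd). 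By Lemma~\ref{trace} the $2^{k-2}$ traces occurring in a fixed block are exactly $\pm\mathbb{T}(\beta_{2^k}^{2i-1})$ for $1\le i\le 2^{k-3}$ — this is the meaning of the symbol $\pm$ in the statement — while the block $j=0$ is $\Phi_{2^k}(x)$, whose trinomial form is Lemma~\ref{cyclo2k}. Multiplying over $j=0,\dots,d-1$ gives the factorization of $\Phi_{2^k}(x^d)$, and $x\mapsto x^{2^r}$ then produces the one for $\Phi_{2^k}(x^{2^rd})$ (with $\Phi_{2^k}(x^{2^r})$ in place of $\Phi_{2^k}(x)$).

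Finally, for irreducibility/reducibility and the count I would view each displayed trinomial of $\Phi_{2^k}(x^{2^rd})$ as $g(x^{2^r})$ with $g(T)=T^2\pm\gamma^{j}\mathbb{T}(\beta_{2^k}^{2i-1})T+\chi(\beta_{2^k})\gamma^{2j}$: its roots are $\beta_{2^k}^{2i-1}\gamma^{j}$ and its $\sigma$-conjugate, so $g$ is irreducible over $\mathbb{F}_q$ of degree $2$ and has order equal to the order $e=2^{k}d'$ of its roots, where $d'=\operatorname{ord}(\gamma^{j})\mid d$ is odd, hence $d'\mid v$. Applying Lemma~\ref{tirr} to $g(x^{2^r})$ (with $l=2$ and exponent $2^r$): condition (i) holds because $2\mid e$, condition (iii) because $u\ge 3$ forces $4\mid q^2-1$, and condition (ii) reads $\gcd\bigl(2^r,(q^2-1)/e\bigr)=\gcd\bigl(2^r,2^{u-k}(v/d')\bigr)=1$, which for $r\ge 1$ holds precisely when $k=u$. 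So every trinomial of $\Phi_{2^u}(x^{2^rd})$ is irreducible over $\mathbb{F}_q$, whereas for $3\le k\le u-1$ — and for $k=2$, where the factors are the binomials $x^{2^{r+1}}-(-\gamma^{j})$ and Lemma~\ref{birr} fails at condition (ii) since $4\nmid q-1$ — every trinomial is reducible over $\mathbb{F}_q$ once $r\ge 1$; comparing degrees yields the stated number of trinomial factors. I expect the one genuinely delicate point to be the bookkeeping in the Frobenius pairing — that for each $j$ the $2^{k-2}$ quadratics are pairwise distinct and together exhaust the block — which is exactly what Lemma~\ref{trace} supplies; everything else is a routine application of Theorem~\ref{cyclo2rd} and Lemmas~\ref{cyclo2k}, \ref{birr}, \ref{tirr}.
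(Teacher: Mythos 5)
Your proposal is correct and follows essentially the same route as the paper: pass to the quadratic extension $\mathbb{F}_Q$, apply Theorem \ref{cyclo2rd} there to get the linear factorization, combine the factors $x-\beta_{2^k}^{2i-1}\gamma^j$ with their Frobenius conjugates into quadratics with coefficients $\gamma^j\mathbb{T}(\beta_{2^k}^{2i-1})$ and $\chi(\beta_{2^k})\gamma^{2j}$, substitute $x\mapsto x^{2^r}$, and settle (ir)reducibility via Lemmas \ref{birr} and \ref{tirr}. Your direct grouping of Frobenius orbits is a cleaner packaging of the paper's more roundabout derivation through $\Phi_{2^{k-2}}(x^{4d})$ and an explicit index permutation, but the underlying mechanism is identical.
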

\begin{proof}   Since $q$ is odd prime power, so $q^2\equiv1\pmod 4$, i.e., $Q\equiv1\pmod 4$. Replacing $q$ by $Q$ and $\alpha_{2^k}$ by $\beta_{2^k}$ in the result of  Theorem \ref{cyclo2rd}, we obtain the factorization of $\Phi_{2^k}(x^{d})$ over $\mathbb{F}_Q$ such as $$\Phi_{2^k}(x^{d})=\Phi_{2^k}(x)\prod_{\substack{1\le i\le 2^{k-1}\\1\le j\le d-1}}(x-\beta_{2^k}^{2i-1}\gamma^j)$$ where integer $1\le k\le u$.  
Clearly, $\Phi_{2}(x^{d})=x^d+1=(x+1)\prod_{\substack{1\le j\le d-1}}(x+\gamma^j)$ and  $\Phi_{4}(x^{d})=x^{2d}+1=\prod_{\substack{0\le j\le d-1}}(x^2+\gamma^{j})=(x^2+1)\prod_{\substack{1\le j\le d-1}}(x^2+\gamma^{j})$. Further,  for $3\le k\le u$, we can write \begin{eqnarray*}\Phi_{2^k}(x^{d})&=&\Phi_{2^{k-2}}(x^{4d})\\&=&\prod_{\substack{1\le i\le 2^{k-3}\\0\le j\le d-1}}(x^4-\beta_{2^{k-2}}^{2i-1}\gamma^j)\\&=&\prod_{\substack{1\le i\le 2^{k-3}\\0\le j\le d-1}}(x-\beta_{2^{k}}^{2i-1}\gamma^j)(x+\beta_{2^{k}}^{2i-1}\gamma^j)(x^2+\beta_{2^{k-1}}^{2i-1}\gamma^j).\end{eqnarray*}
 For any fixed $0\le j\le d-1$, using the permutation $i\mapsto 2^{k-3}-i+1$ on the set  of integers $1\le i\le 2^{k-3}$, we obtain \begin{eqnarray*}\prod_{i=1}^{2^{k-3}}(x^2+\beta_{2^{k-1}}^{2i-1}\gamma^j)&=&
\prod_{i=1}^{2^{k-3}}(x^2-\beta_{2^{k-1}}^{-2i+1}\gamma^j).\end{eqnarray*}
 Since $\beta_{2^k}^{2q(2i-1)}=\beta_{2^{k-1}}^{-2i+1}$, so that $x^2-\beta_{2^{k-1}}^{-2i+1}\gamma^{2j}=
(x-\beta_{2^k}^{q(2i-1)}\gamma^j)(x+\beta_{2^k}^{q(2i-1)}\gamma^j)$. Therefore  \begin{eqnarray*}
\prod_{i=1}^{2^{k-3}}(x^2-\beta_{2^{k-1}}^{-2i+1}\gamma^j)=\prod_{i=1}^{2^{k-3}}(x-\beta_{2^k}^{q(2i-1)}\gamma^j)(x+\beta_{2^k}^{q(2i-1)}\gamma^j).\end{eqnarray*}  Further,  $\beta_{2^k}^{2i-1}\gamma^j$ and $-\beta_{2^k}^{2i-1}\gamma^j$ are non-conjugate elements in $\mathbb{F}_Q\setminus\mathbb{F}_q$ for any $1\le i\le 2^{k-3}$. Therefore the minimal polynomial of $\pm\beta_{2^k}^{2i-1}\gamma^j$  is $x^2\pm\mathbb{T}(\beta_{2^k}^{2i-1}\gamma^j)x+
(\beta_{2^k}^{2i-1}\gamma^j)^{q+1}$. Note that $\mathbb{T}(\beta_{2^k}^{2i-1}\gamma^j)=\gamma^j\mathbb{T}(\beta_{2^k}^{2i-1})$ and $(\beta_{2^k}^{2i-1}\gamma^j)^{q+1}=
\gamma^{2j}\chi(\beta_{2^k}^{2i-1})=\gamma^{2j}\chi(\beta_{2^k})$ for every $1\le i\le 2^{k-3}$ and $3\le k\le u$. Thus we obtain $\Phi_{2^k}(x^{d})$ over $\mathbb{F}_q$ is \begin{eqnarray*}\Phi_{2^k}(x^{d})&=&\prod_{\substack{1\le i\le 2^{k-3}\\0\le j\le d-1}}(x^2\pm \gamma^j\mathbb{T}(\beta_{2^k}^{2i-1})x+\gamma^{2j}\chi(\beta_{2^k})).\end{eqnarray*}  Further, for  any integer $r\ge1$, using the transformation $x\rightarrow x^{2^r}$, we have \begin{eqnarray}\label{srim}\Phi_{2^k}(x^{2^rd})\nonumber&=&\prod_{\substack{1\le i\le 2^{k-3}\\0\le j\le d-1}} (x^{2^{r+1}}\pm \gamma^j\mathbb{T}(\beta_{2^k}^{2i-1})x^{2^r}+\chi(\beta_{2^k})\gamma^{2j})
\\\nonumber&=&\Phi_{2^k}(x^{2^r})\prod_{\substack{1\le i\le 2^{k-3}\\1\le j\le d-1}} (x^{2^{r+1}}\pm \gamma^j\mathbb{T}(\beta_{2^k}^{2i-1})x^{2^r}+\chi(\beta_{2^k})\gamma^{2j}).\end{eqnarray}
 Then by  Lemma \ref{tirr}, every trinomial $x^{2^{r+1}}\pm \gamma^j\mathbb{T}(\beta_{2^k}^{2i-1})x^{2^r}+\chi(\beta_{2^k})\gamma^{2j}$ is reducible for $3\le k\le u-1$ and irreducible over $\mathbb{F}_q$ for $k=u$.\end{proof}

 In the following theorem, we determine the factorization of $x^{2^nd}-1$ over $\mathbb{F}_q$, when $q\equiv3\pmod 4$ and $q\equiv1\pmod d$. 
\begin{thm}\label{fact2t+1}If $q\equiv3\pmod 4$ and $d|(q-1)$, then $x^{2^nd}-1$  can be written as a product of  $d(2^{u-2}(n-u+2)+1)$  irreducible factors over $\mathbb{F}_q$ as:  \begin{eqnarray*}
x^{2^nd}-1&=&(x^{2^n}-1)\prod_{1\le j\le d-1}(x\pm \gamma^j)\prod_{\substack{2\le k\le u-1\\1\le i\le 2^{k-2}\\1\le j\le d-1}}(x^2-\gamma^{j}\mathbb{T}(\beta_{2^k}^{2i-1})x+\gamma^{2j})\\&&
\prod_{\substack{0\le r\le n-u\\1\le i\le 2^{u-3}\\1\le j\le d-1}}(x^{2^{r+1}}\pm \gamma^{j}\mathbb{T}(\beta_{2^u}^{2i-1})x^{2^r}-\gamma^{2j}).
\end{eqnarray*} \end{thm}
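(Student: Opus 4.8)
The plan is to reduce the statement to the cyclotomic pieces already factored in Section~3 and then reassemble them. Substituting $y=x^{d}$ into the polynomial identity $y^{2^{n}}-1=\prod_{k=0}^{n}\Phi_{2^{k}}(y)$ (with the convention $\Phi_{1}(y)=y-1$) gives $x^{2^{n}d}-1=\prod_{k=0}^{n}\Phi_{2^{k}}(x^{d})$, and likewise $x^{2^{n}}-1=\prod_{k=0}^{n}\Phi_{2^{k}}(x)$. As each $\Phi_{2^{k}}(x)$ divides $\Phi_{2^{k}}(x^{d})$, dividing one product by the other yields
\[
x^{2^{n}d}-1=(x^{2^{n}}-1)\prod_{k=0}^{n}\frac{\Phi_{2^{k}}(x^{d})}{\Phi_{2^{k}}(x)},
\]
so it suffices to identify each ratio on the right with the matching sub-product of the claimed formula, to prove that the displayed factors are irreducible, and to count them.

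For $k=0$ and $k=1$ I would invoke Lemma~\ref{factm}: $(x^{d}-1)/(x-1)=\prod_{1\le j\le d-1}(x-\gamma^{j})$ and, since $-1=(-1)^{d}$ for odd $d$, $(x^{d}+1)/(x+1)=\prod_{1\le j\le d-1}(x+\gamma^{j})$; together these produce the factor $\prod_{1\le j\le d-1}(x\pm\gamma^{j})$. For $3\le k\le u$ the $r=0$ instance of Theorem~\ref{cyclotri} gives $\Phi_{2^{k}}(x^{d})/\Phi_{2^{k}}(x)=\prod_{i,j}\bigl(x^{2}\pm\gamma^{j}\mathbb{T}(\beta_{2^{k}}^{2i-1})x+\chi(\beta_{2^{k}})\gamma^{2j}\bigr)$, where $\chi(\beta_{2^{k}})=1$ for $k<u$ and $\chi(\beta_{2^{u}})=-1$; for the degenerate index $k=2$ one instead starts from $\Phi_{4}(x^{d})=x^{2d}+1=\prod_{0\le j\le d-1}(x^{2}+\gamma^{j})$ and reindexes $j\mapsto 2j\bmod d$ (a bijection because $d$ is odd) to write its nontrivial part uniformly as $\prod_{1\le j\le d-1}(x^{2}+\gamma^{2j})$, which is exactly the $\mathbb{T}(\beta_{4})=0$ case of the preceding display. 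For $u<k\le n$ I would set $k=u+r$ with $r\ge1$; by Lemma~\ref{cyclotomic}(ii), $\Phi_{2^{k}}(x^{d})=\Phi_{2^{u}}(x^{2^{r}d})$ and $\Phi_{2^{k}}(x)=\Phi_{2^{u}}(x^{2^{r}})$, so the $r\ge1$ part of Theorem~\ref{cyclotri} identifies the ratio with $\prod_{i,j}\bigl(x^{2^{r+1}}\pm\gamma^{j}\mathbb{T}(\beta_{2^{u}}^{2i-1})x^{2^{r}}-\gamma^{2j}\bigr)$. Collecting the contributions of $k=2,\dots,u-1$ into one product indexed by $k$ and those of $k=u,\dots,n$ into one product indexed by $r=k-u$ reproduces precisely the last two products in the statement.

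Next I would check irreducibility of every displayed factor. Linear factors are irreducible for trivial reasons. For $2\le k\le u-1$ (and for the $r=0$ trinomials arising from $k=u$) the quadratic $x^{2}\pm\gamma^{j}\mathbb{T}(\beta_{2^{k}}^{2i-1})x+\chi(\beta_{2^{k}})\gamma^{2j}$ has roots $\pm\beta_{2^{k}}^{2i-1}\gamma^{j}\in\mathbb{F}_{Q}\setminus\mathbb{F}_{q}$ and therefore coincides with the minimal polynomial $x^{2}-\mathbb{T}(\alpha)x+\alpha^{q+1}$ over $\mathbb{F}_{q}$ of $\alpha=\pm\beta_{2^{k}}^{2i-1}\gamma^{j}$, hence is irreducible, as already exhibited in the proof of Theorem~\ref{cyclotri}. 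For $r\ge1$, each trinomial in the last product equals $f(x^{2^{r}})$ for an irreducible quadratic $f\in\mathbb{F}_{q}[x]$ whose order $e$ equals the order of its roots; Lemma~\ref{tirr} applies because $2\mid e$ (indeed $2^{u}\mid e$ since $u\ge3$), because $\gcd(2^{r},(q^{2}-1)/e)=1$ as $e$ carries the full $2$-part $2^{u}$ of $q^{2}-1$, and because $4\mid(q^{2}-1)$ whenever $4\mid 2^{r}$. This is precisely the irreducibility already asserted in Theorem~\ref{cyclotri}.

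Finally, to count: by Lemma~\ref{cyclo2k} for $2\le k\le u$ and the splitting $\Phi_{2^{k}}(x)=\Phi_{2^{u}}(x^{2^{k-u}})$ for $k>u$ (whose factors are irreducible of degree $2^{k-u+1}$ by Lemma~\ref{tirr}), the polynomial $x^{2^{n}}-1$ has $1+1+\sum_{k=2}^{u}2^{k-2}+\sum_{k=u+1}^{n}2^{u-2}=1+2^{u-2}(n-u+2)$ irreducible factors over $\mathbb{F}_{q}$. For every $k$ and every $e\mid d$ the multiplicative order of $q$ modulo $2^{k}e$ equals its order modulo $2^{k}$ because $d\mid q-1$, so $\Phi_{2^{k}e}(x)$ has $\phi(e)$ times as many irreducible factors as $\Phi_{2^{k}}(x)$; since $\Phi_{2^{k}}(x^{d})=\prod_{e\mid d}\Phi_{2^{k}e}(x)$ and $\sum_{e\mid d}\phi(e)=d$, each $\Phi_{2^{k}}(x^{d})$ has $d$ times as many irreducible factors as $\Phi_{2^{k}}(x)$, and hence $x^{2^{n}d}-1$ has $d\bigl(2^{u-2}(n-u+2)+1\bigr)$ irreducible factors. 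The substantive work, namely splitting the decomposable cyclotomics into irreducibles, is already done in Theorems~\ref{cyclo2rd} and~\ref{cyclotri}, so the main obstacle here is bookkeeping: matching the $1\le i\le 2^{k-3}$-with-sign indexing of Lemmas~\ref{cyclo2k} and~\ref{trace} to the $1\le i\le 2^{k-2}$ indexing of the statement, dealing with the degenerate $k=2$ reindexing above, and recording that the stated formula and count presuppose $n\ge u-1$ (empty products read as $1$).
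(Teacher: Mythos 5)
Your proposal is correct and follows essentially the same route as the paper: both reduce $x^{2^nd}-1$ to the decomposable cyclotomic pieces $\Phi_{2^k}(x^d)$ (with $\Phi_{2^k}(x^d)=\Phi_{2^u}(x^{2^{k-u}d})$ for $k>u$) and then invoke Theorem~\ref{cyclotri} and Lemma~\ref{factm} to split each piece. The only difference is that you also verify the irreducibility of the trinomial factors via Lemma~\ref{tirr} and supply the count $d(2^{u-2}(n-u+2)+1)$ explicitly, details the paper's proof asserts without argument.
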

\begin{proof}   By substituting $s=1$ in Lemma \ref{fact2ndcyclo}, the factorization of $x^{2^nd}-1$ over $\mathbb{F}_q$ reduces to  \begin{eqnarray*}
x^{2^nd}-1=(x^d-1)\Phi_{2}(x^{d})\prod_{1\le r\le n-1}\Phi_{2}(x^{2^{r}d}).
\end{eqnarray*}  At this point, we recall $u=\max\{r\in\mathbb{Z}:2^r|(Q-1)\}$. Then we write \begin{eqnarray*}
x^{2^nd}-1\nonumber&=&(x^{2d}-1)\prod_{k=2}^{u-1}
\Phi_{2^k}(x^{d})\prod_{r=u}^{n}\Phi_{2^r}(x^{d})\\&=&\prod_{j=0}^{d-1}\bigg((x\pm \gamma^j)(x^2+\gamma^j)\prod_{k=3}^{u-1}\Phi_{2^k}(x^{d})\prod_{r=0}^{n-u}\Phi_{2^u}(x^{2^rd})\bigg).
\end{eqnarray*}The result now follows from  Theorem \ref{cyclotri}.\end{proof}
 In the following corollary, the factorization of $\Phi_{2^{n}d}(x)$ over $\mathbb{F}_q$  is to be deduced for every prime odd divisor $d$ of $q-1$. 
\begin{cor}\label{phi2nd4k+1} Let $q$ be an odd prime power and   $d$ be an odd prime such that $d|(q-1)$. 
\begin{enumerate}
\item[(i)]  If $q\equiv1\pmod 4$ and $2\le k\le s$. Then, for any integer $n\ge k$,   the factorization of $\Phi_{2^{n}d}(x)$ over $\mathbb{F}_q$  into $2^{k-1}(d-1)$ factors is given by:
\begin{eqnarray*}
\Phi_{2^{n}d}(x)=
\displaystyle{\prod_{\substack{1\le i\le 2^{k-1}\\1\le j\le d-1}}(x^{2^{n-k}}-\alpha_{2^k}^{2i-1}\gamma^j)}.\end{eqnarray*}
All these factors of $\Phi_{2^{n}d}(x)$ are irreducible over $\mathbb{F}_q$ when $k=s$.
\item[(ii)] If $q\equiv3\pmod 4$ and $3\le k\le  u$. Then, for any $n\ge k$,  the factorization of $\Phi_{2^{n}d}(x)$ into $2^{k-2}(d-1)$ factors over $\mathbb{F}_q$ is given by: \begin{eqnarray*}
\Phi_{2^{n}d}(x)=\displaystyle{\prod_{\substack{1\le i\le 2^{k-3}\\1\le j\le d-1}} (x^{2^{n-k+1}}\pm \gamma^j\mathbb{T}(\beta_{2^k}^{2i-1})x^{2^{n-k}}+\chi(\beta_{2^k})\gamma^{2j})}.\end{eqnarray*} \end{enumerate}

All these factors of $\Phi_{2^{n}d}(x)$ are irreducible over $\mathbb{F}_q$  when $k=u$.

\end{cor}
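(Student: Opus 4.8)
The plan is to derive both statements from the recursion of Lemma \ref{cyclotomic}(iii), $\Phi_{2^nd}(x)=\Phi_{2^k}(x^{2^{n-k}d})/\Phi_{2^k}(x^{2^{n-k}})$, by substituting the decomposable factorisations already obtained in Section 3 and cancelling. For part (i): since $d$ is an odd divisor of $q-1$ and $2^s\mid q-1$ with $\gcd(2^s,d)=1$, we have $2^kd\mid q-1$ for every $k\le s$, so Theorem \ref{cyclo2rd} applies with $r=n-k\ge0$ and gives $\Phi_{2^k}(x^{2^{n-k}d})=\Phi_{2^k}(x^{2^{n-k}})\prod_{1\le i\le 2^{k-1},\,1\le j\le d-1}(x^{2^{n-k}}-\alpha_{2^k}^{2i-1}\gamma^j)$; dividing by $\Phi_{2^k}(x^{2^{n-k}})$ (which equals $\Phi_{2^n}(x)$ by Lemma \ref{cyclotomic}(ii)) yields the asserted product of $2^{k-1}(d-1)$ binomials in $x^{2^{n-k}}$, and the degree count $2^{k-1}(d-1)\cdot2^{n-k}=2^{n-1}(d-1)=\phi(2^nd)$ confirms that nothing is lost. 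Part (ii) is entirely parallel with $s=1$: here $q\equiv3\pmod4$ forces $d\mid t=(q-1)/2$, hence $d\mid v$ (since $t\mid v$), so the hypotheses of Theorem \ref{cyclotri} are met, and substituting $r=n-k$ there and cancelling $\Phi_{2^k}(x^{2^{n-k}})=\Phi_{2^n}(x)$ produces the asserted product of $2^{k-2}(d-1)$ trinomials (the $\pm$ doubling the count), the case $n=k$ being covered by the $r=0$ clause of Theorem \ref{cyclotri}.

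It remains to establish irreducibility at the boundary exponents. In part (i) with $k=s$ a typical factor is the binomial $x^{2^{n-s}}-a$ with $a=\alpha_{2^s}^{2i-1}\gamma^j\in\mathbb{F}_q^*$; since $2i-1$ is odd and $d$ is prime (so $\gcd(j,d)=1$ and $\gamma^j$ has order exactly $d$), $a$ has order $2^sd$ by coprimality. Writing $q-1=2^st$ with $t$ odd and $d\mid t$, we have $(q-1)/(2^sd)=t/d$ odd, so the only prime divisor of $2^{n-s}$, namely $2$ (for $n>s$), divides the order $2^sd$ of $a$ but not $(q-1)/(2^sd)$; and if $4\mid 2^{n-s}$ then $4\mid q-1$ because $q\equiv1\pmod4$. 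Hence Lemma \ref{birr} gives irreducibility when $n>s$, while for $n=s$ the factor $x-a$ is linear. In part (ii) with $k=u$ nothing further is needed: the factors of $\Phi_{2^nd}(x)$ are precisely the trinomials $x^{2^{n-u+1}}\pm\gamma^j\mathbb{T}(\beta_{2^u}^{2i-1})x^{2^{n-u}}+\chi(\beta_{2^u})\gamma^{2j}$ with $1\le i\le 2^{u-3}$ and $1\le j\le d-1$, and these were shown to be irreducible over $\mathbb{F}_q$ in Theorem \ref{cyclotri} — directly for $n>u$, and for $n=u$ by their identification there as the minimal polynomials over $\mathbb{F}_q$ of the elements $\pm\beta_{2^u}^{2i-1}\gamma^j\in\mathbb{F}_Q\setminus\mathbb{F}_q$.

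The one genuinely delicate point is the order bookkeeping in the second paragraph: one must check that the twisted elements $\alpha_{2^s}^{2i-1}\gamma^j$ and $\beta_{2^u}^{2i-1}\gamma^j$ have order exactly $2^sd$, respectively $2^ud$ — which is where the hypothesis that $d$ is prime is essential, as it forces $\gamma^j$ to have order $d$ for every $1\le j\le d-1$ — and that the ratios $(q-1)/(2^sd)=t/d$ and $(q^2-1)/(2^ud)=v/d$ are odd, which holds because any odd prime dividing $q-1$ also divides the odd part of $q^2-1$. Granting this, both parts reduce to a substitution into the results of Section 3 followed by a single appeal to Lemma \ref{birr}, respectively a direct citation of Theorem \ref{cyclotri}.
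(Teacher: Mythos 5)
Your proposal is correct and follows essentially the same route as the paper: apply Lemma \ref{cyclotomic}(iii) to write $\Phi_{2^nd}(x)=\Phi_{2^k}(x^{2^{n-k}d})/\Phi_{2^k}(x^{2^{n-k}})$, substitute the decomposable factorizations of Theorem \ref{cyclo2rd} (part (i)) and Theorem \ref{cyclotri} (part (ii)), and settle irreducibility at $k=s$ via Lemma \ref{birr} and at $k=u$ via Theorem \ref{cyclotri}/Lemma \ref{tirr}. Your explicit order bookkeeping for $\alpha_{2^s}^{2i-1}\gamma^j$ is a welcome elaboration of a step the paper leaves implicit, but it is not a different method.
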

\begin{proof} For any integer $n\ge k\ge 1$, by Lemma \ref{cyclotomic} (iii), the cyclotomic polynomial $\Phi_{2^nd}(x)$ over $\mathbb{F}_q$ is \begin{center}$\displaystyle{\Phi_{2^{n}d}(x)=\frac{\Phi_{2^k}(x^{2^{n-k}d})}{\Phi_{2^k}(x^{2^{n-k}})}}$ for integer $n\ge k\ge1$.\end{center}
 The remaining part of proof now follows from Theorem \ref{cyclo2rd} and Theorem \ref{cyclotri}. The irreducibility follows from Lemma \ref{birr} for binomials and Lemma \ref{tirr} for trinomials factors of $\Phi_{2^{n}d}(x)$ over $\mathbb{F}_q$. \end{proof}
  \section{Main results}
  In this section, we introduce a direct method to obtain the coefficients of irreducible factors of and $\Phi_{2^nt}(x)$ and hence of  $x^{2^nt}-1$  over $\mathbb{F}_q$ when $q$ and $t$ are  odd primes such that either $q=2t+1$ or $q=4t+1$. First, we define  $\mathcal{S}_q=\{a^2:a\in\mathbb{F}_q^*\}$ and $\mathcal{O}_q=\{a\in\mathbb{F}_q^*:O_q(a)$ is odd$\}$, where $O_q(a)$ denotes the order of $a\in\mathbb{F}_q^*$.  Note that $\mathcal{S}_3=\mathcal{O}_3=\{1\}$, $\mathcal{S}_5=\mathcal{O}_5=\{1,4\}$, $\mathcal{S}_7=\mathcal{O}_7=\{1,2,4\}$.
\begin{thm} \label{osq} For any odd prime power $q$,  $\mathcal{S}_q$ and $\mathcal{O}_q$ are subgroups of $\mathbb{F}_q^*$ such that $\mathcal{O}_q\subseteq \mathcal{S}_q$.  Further, if $q=2^st+1$ for some integer $s\ge1$ and $t$ is an odd integer. Then, the subgroup $\mathcal{O}_q$ has $t$ distinct element  and the set $\mathcal{S}_q\setminus \mathcal{O}_q$ contains $(2^{s-1}-1)t$ elements of $\mathcal{S}_q$. Further, $\mathcal{O}_q=\mathcal{S}_q$ if and only if $q\equiv3\pmod 4$.\end{thm}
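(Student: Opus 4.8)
The plan is to exploit the fact that $\mathbb{F}_q^*$ is cyclic of order $q-1=2^st$ with $t$ odd, and to identify $\mathcal{S}_q$ and $\mathcal{O}_q$ with the unique subgroups of the appropriate orders. First I would note that the squaring map $a\mapsto a^2$ is a group endomorphism of $\mathbb{F}_q^*$ with image $\mathcal{S}_q$; since $q$ is odd its kernel is $\{\pm1\}$, so $\mathcal{S}_q$ is a subgroup of order $(q-1)/2=2^{s-1}t$.

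Next I would pin down $\mathcal{O}_q$. The key observation is that $a\in\mathcal{O}_q$ if and only if $O_q(a)\mid t$: indeed $O_q(a)\mid q-1=2^st$, and $O_q(a)$ is odd exactly when it is coprime to $2^s$, i.e. exactly when it divides $t$. Hence $\mathcal{O}_q=\{a\in\mathbb{F}_q^*: a^t=1\}$ is the kernel of the homomorphism $a\mapsto a^t$, so it is a subgroup, and because $\mathbb{F}_q^*$ is cyclic this kernel has precisely $\gcd(t,q-1)=t$ elements. This settles the first cardinality claim. (If one prefers not to invoke cyclicity for the subgroup property, it also follows from the general fact that the odd-order elements of any finite abelian group are closed under multiplication, since the order of a product divides the $\mathrm{lcm}$ of the orders.)

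For the inclusion $\mathcal{O}_q\subseteq\mathcal{S}_q$ I would give the direct argument: if $O_q(a)=m$ is odd then $(m+1)/2\in\mathbb{Z}$ and $\bigl(a^{(m+1)/2}\bigr)^2=a^{m+1}=a$, so $a\in\mathcal{S}_q$. Alternatively, $\mathcal{O}_q$ is the unique subgroup of order $t$ in the cyclic group $\mathbb{F}_q^*$, while $\mathcal{S}_q$, being cyclic of order $2^{s-1}t$, contains a unique subgroup of order $t$, which must therefore be $\mathcal{O}_q$. From the inclusion we get $|\mathcal{S}_q\setminus\mathcal{O}_q|=2^{s-1}t-t=(2^{s-1}-1)t$, and the final equivalence is then immediate: $\mathcal{O}_q=\mathcal{S}_q$ if and only if $|\mathcal{O}_q|=|\mathcal{S}_q|$, i.e. $t=2^{s-1}t$, i.e. $s=1$, i.e. $q-1\equiv2\pmod4$, i.e. $q\equiv3\pmod4$.

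I do not anticipate any real obstacle; the single point that deserves a sentence of care is the identification $\mathcal{O}_q=\{a:a^t=1\}$, which rests on the elementary remark that an odd divisor of $2^st$ must divide $t$. Everything else is routine bookkeeping with the cyclic group $\mathbb{F}_q^*$ and its unique subgroups of given order.
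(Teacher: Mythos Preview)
Your proof is correct and follows the same overall strategy as the paper: both exploit the cyclic structure of $\mathbb{F}_q^*$ of order $2^st$ to identify $\mathcal{S}_q$ and $\mathcal{O}_q$ with its subgroups of orders $2^{s-1}t$ and $t$. Where you differ is in execution rather than conception. The paper derives $\mathcal{O}_q\subseteq\mathcal{S}_q$ by observing that an odd order $l$ divides $t$ and then (implicitly) appealing to the subgroup lattice of a cyclic group; your explicit square-root $a=(a^{(m+1)/2})^2$ is more self-contained and does not need cyclicity at that step. Likewise your identification $\mathcal{O}_q=\ker(a\mapsto a^t)$ makes the subgroup property and the count $|\mathcal{O}_q|=t$ transparent, while the paper phrases the latter as $|\mathcal{O}_q|=\max\{l:O_q(a)=l,\ a\in\mathcal{O}_q\}$, which tacitly uses that $\mathcal{O}_q$ is cyclic. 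Finally, you supply both directions of the equivalence $\mathcal{O}_q=\mathcal{S}_q\Leftrightarrow q\equiv3\pmod4$, whereas the paper only records the implication from $q\equiv3\pmod4$.
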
 \begin{proof} Let $q=2^st+1$ with integer $s\ge1$ and $t$ is odd. Since $\mathcal{S}_q$ contains $(q-1)/2$ distinct elements of $\mathbb{F}_q^*$, so the order of $\mathcal{S}_q$, i.e., $|\mathcal{S}_q|=2^{s-1}t$. Now let $a\in\mathcal{O}_q$ with $|a|=l$, then $l$ is odd. By the converse of Lagrange's theorem, $l|(q-1)$. Since $l$ is odd, so $l|t$ and hence $a\in\mathcal{S}_q$. It follows that $\mathcal{O}_q\subseteq\mathcal{S}_q$ and $|\mathcal{O}_q|=\max\{l:|a|=l$ and $a\in \mathcal{O}_q\}=t$.   In view of the above, it is trivial to note the number of elements in  $\mathcal{S}_q\setminus\mathcal{O}_q$ is $(2^{s-1}-1)t$. Further, if $q\equiv3\pmod 4$, then $s=1$ and hence $\mathcal{O}_q=\mathcal{S}_q$. \end{proof}   
\begin{thm} \label{os} Let $q$ and $t$ be odd primes such that $q=2t+1$. Then $\mathcal{S}_q=\mathcal{O}_q=\langle 4\rangle$.\end{thm}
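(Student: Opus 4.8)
The plan is to use Theorem~\ref{osq} to collapse the whole assertion to a fact about a group of prime order. First I would record that, since $t$ is odd, $q=2t+1\equiv 3\pmod 4$, so Theorem~\ref{osq} applies directly: it yields $\mathcal{O}_q=\mathcal{S}_q$ and $|\mathcal{S}_q|=(q-1)/2=t$. Because $t$ is prime, $\mathcal{S}_q$ is a cyclic group with no proper nontrivial subgroups, so it is enough to produce one element of $\mathcal{S}_q$ that is $4$ and is distinct from the identity; that single element will then generate the whole group.

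Next I would verify those two points about $4$. That $4=2^2\in\mathcal{S}_q$ is immediate from the definition $\mathcal{S}_q=\{a^2:a\in\mathbb{F}_q^*\}$. To see that $4\neq 1$ in $\mathbb{F}_q$, note $4=1$ would force $q\mid 3$, i.e. $q=3$ and $t=1$; but $t$ is an odd prime, so $t\ge 3$ and $q=2t+1\ge 7$, ruling this out. Hence $\langle 4\rangle$ is a nontrivial subgroup of $\mathcal{S}_q$, and by Lagrange's theorem its order divides the prime $t$; being larger than $1$, it must equal $t=|\mathcal{S}_q|$. Therefore $\langle 4\rangle=\mathcal{S}_q=\mathcal{O}_q$, as claimed. (One also sees $4\in\mathcal{O}_q$ for free, since the order of $4$ divides the odd number $t$.)

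The only step that needs any attention at all is the genuinely necessary exclusion of the degenerate case $q=3$ (equivalently $t=1$), which is exactly what the hypothesis ``$t$ an odd prime'' rules out; beyond that, the argument is a one-line consequence of Theorem~\ref{osq} together with the primality of $t$, so I do not expect any real obstacle.
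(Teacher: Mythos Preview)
Your proof is correct and follows essentially the same approach as the paper's: invoke Theorem~\ref{osq} to get $\mathcal{S}_q=\mathcal{O}_q$ of prime order $t$, observe that $4$ is a nontrivial square, and conclude that it generates. Your version is slightly more careful in explicitly checking $q\equiv 3\pmod 4$ and ruling out $4=1$, but otherwise matches the paper's argument line for line.
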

\begin{proof} By Theorem \ref{osq},   $\mathcal{S}_q= \mathcal{O}_q$.  Since $4\in\mathcal{S}_q$,  so $4\in\mathcal{O}_q$. Note that $\mathcal{O}_q$ is cyclic  group of prime order $t$, so any element of $\mathcal{O}_q$, except 1, works as a generator. It follows that $\mathcal{O}_q=\langle 4\rangle$.
\end{proof}
\begin{lem}[see Corollary 7.10 in \cite{jj}]\label{qr} If $p$ is an odd prime then \begin{center}$\bigg(\dfrac{2}{p}\bigg)=(-1)^{(p^2-1)/8}.$\end{center} \end{lem}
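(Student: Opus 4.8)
The plan is to give the classical Gauss-sum argument, which fits naturally into the characteristic-$p$ setting the paper already works in. First I would fix a primitive $8$th root of unity $\omega$ in the algebraic closure $\overline{\mathbb{F}_p}$ (it exists since $\gcd(8,p)=1$) and set $g=\omega+\omega^{-1}$. Since $\omega^4=-1$, the element $\omega^2$ is a primitive $4$th root of unity, so $\omega^2+\omega^{-2}=0$; expanding $g^2=\omega^2+2+\omega^{-2}$ then gives $g^2=2$ in $\overline{\mathbb{F}_p}$, and in particular $g$ is a unit.

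Next I would bring in the Legendre symbol via Euler's criterion, which in $\mathbb{F}_p$ reads $2^{(p-1)/2}=\left(\tfrac{2}{p}\right)$, so that
\[
g^{p-1}=(g^{2})^{(p-1)/2}=2^{(p-1)/2}=\left(\tfrac{2}{p}\right).
\]
On the other hand, the Frobenius map $x\mapsto x^p$ is a ring homomorphism of $\overline{\mathbb{F}_p}$, hence $g^p=\omega^p+\omega^{-p}$. The value of $\omega^p+\omega^{-p}$ depends only on $p\bmod 8$: it equals $g$ when $p\equiv\pm1\pmod 8$, and equals $-g$ when $p\equiv\pm3\pmod 8$ (for the latter, rewrite $\omega^{\pm3}$ using $\omega^4=-1$). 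Writing $\varepsilon(p)$ for this sign, we get $g^p=\varepsilon(p)\,g$, and cancelling the unit $g$ yields $\left(\tfrac{2}{p}\right)=g^{p-1}=\varepsilon(p)$ in $\mathbb{F}_p$; since $p$ is odd we have $1\neq-1$ in $\mathbb{F}_p$, so this is an honest equality of signs $\pm1$.

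It then remains to identify $\varepsilon(p)$ with $(-1)^{(p^2-1)/8}$. As $p$ is odd, $8\mid p^2-1$, so the exponent is an integer, and $(p^2-1)/8$ is even precisely when $16\mid p^2-1$, i.e. precisely when $p\equiv\pm1\pmod 8$ --- exactly the case $\varepsilon(p)=1$; a one-line check over the four odd residue classes modulo $8$ closes the argument. I do not anticipate any genuine obstacle: the only spots needing care are the identity $g^2=2$ (hence $g\neq0$) inside $\overline{\mathbb{F}_p}$ and the sign bookkeeping modulo $8$. If a purely elementary route is preferred, Gauss's lemma gives $\left(\tfrac{2}{p}\right)=(-1)^n$ with $n=\bigl|\{k:\ p/4<k\le(p-1)/2\}\bigr|=(p-1)/2-\lfloor p/4\rfloor$, and the same mod-$8$ case analysis shows $n\equiv(p^2-1)/8\pmod 2$.
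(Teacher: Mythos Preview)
Your argument is correct: the identity $g^2=2$ for $g=\omega+\omega^{-1}$ with $\omega$ a primitive $8$th root of unity in $\overline{\mathbb{F}_p}$, combined with Frobenius and Euler's criterion, is a standard clean route to the second supplement, and your sign bookkeeping modulo $8$ is accurate. However, the paper does not actually prove this lemma at all --- it merely quotes the result from Jones and Jones, \emph{Elementary Number Theory}, and invokes it as a black box (only once, to conclude $2\notin\mathcal{S}_q$ when $q\equiv5\pmod 8$). So there is nothing in the paper to compare against: you have supplied a self-contained argument where the paper relies on an external reference. If anything, your characteristic-$p$ approach fits the finite-field setting of the paper more naturally than the purely arithmetic proof one typically finds in an elementary number theory text.
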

\begin{thm} \label{os1}Let $q$ and $t$ be odd primes such that $q=4t+1$. Then  $t, \sqrt{t}\in\mathcal{S}_q$. Further
\begin{enumerate}\item[(i)] $\mathcal{O}_q=\langle t\rangle$.
\item[(ii)]  $\mathcal{S}_q=\langle 4\rangle$ and $\mathcal{O}_q=\langle 16\rangle$ for $q>13$. \end{enumerate} \end{thm}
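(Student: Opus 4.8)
The plan is to reduce everything to two elementary facts: the identity $4t=q-1$ and the second supplement to quadratic reciprocity (Lemma~\ref{qr}), which together pin down the residue classes of $2$ and of $t$ modulo $q$. From $q=4t+1$ we get $q-1=4t$, so in the notation of Theorem~\ref{osq} we are in the case $s=2$; hence $|\mathcal{S}_q|=2t$ and $|\mathcal{O}_q|=t$, both cyclic of the indicated (prime, for $\mathcal{O}_q$) order. Also $4t\equiv4\pmod 8$ because $t$ is odd, so $q\equiv5\pmod 8$, and Lemma~\ref{qr} gives $\left(\frac{2}{q}\right)=-1$; equivalently, by Euler's criterion, $4^{t}=2^{2t}=2^{(q-1)/2}\equiv-1\pmod q$. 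This last congruence is the engine of the whole argument.

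First, $t\in\mathcal{S}_q$: since $\gcd(q,t)=1$ and $(q-1)/2=2t$ is even, quadratic reciprocity gives $\left(\frac{t}{q}\right)=\left(\frac{q}{t}\right)=\left(\frac{1}{t}\right)=1$, so $t$ is a square modulo $q$. Next --- and this is the key step --- I claim $t\in\mathcal{O}_q$, i.e.\ the order of $t$ is odd, i.e.\ $t^{t}\equiv1\pmod q$. Here I use that $4t\equiv-1\pmod q$, so $t\equiv-4^{-1}\pmod q$; raising to the $t$th power and using that $t$ is odd yields $t^{t}\equiv-(4^{t})^{-1}\equiv-(-1)^{-1}\equiv1\pmod q$. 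Since $\mathcal{O}_q$ is cyclic of prime order $t$ and $t\not\equiv1\pmod q$ (because $1<t<q$), every nonidentity element generates it, so $\mathcal{O}_q=\langle t\rangle$, which is part~(i). Moreover $t\in\mathcal{O}_q$ forces $t$ to be a fourth power, say $t=c^{4}$; its two square roots $\pm c^{2}$ are then themselves squares (recall $-1\in\mathcal{S}_q$ as $q\equiv1\pmod 4$), so $\sqrt{t}\in\mathcal{S}_q$.

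For part~(ii): $4=2^{2}\in\mathcal{S}_q$, but $4\notin\mathcal{O}_q$ because $4^{t}\equiv-1\not\equiv1\pmod q$. The order of $4$ divides $2t$ with $t$ prime, so it equals $2$ or $2t$; it is not $2$, since $4\equiv-1\pmod q$ would force $q=5$, which is excluded. Hence $4$ has order $2t=|\mathcal{S}_q|$ and $\mathcal{S}_q=\langle 4\rangle$; then $16=4^{2}$ is the square of a generator of a cyclic group of order $2t$, hence has order $t$, so $\mathcal{O}_q=\langle 16\rangle$ (using $16\not\equiv1\pmod q$, which holds as soon as $q>5$).

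The only non-routine point is the observation $t\equiv-4^{-1}\pmod q$: it converts the a priori biquadratic question ``is $t$ a fourth power mod $q$?'' into the elementary supplement $\left(\frac{2}{q}\right)=-1$. Everything else is bookkeeping with the cyclic group $\mathbb{F}_q^{*}$ and the orders supplied by Theorem~\ref{osq}, and I anticipate no real obstacle beyond stating those order computations cleanly.
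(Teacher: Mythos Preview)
Your argument is correct and rests on the same two ingredients as the paper's proof: the identity $4t\equiv-1\pmod q$ and the second supplement (Lemma~\ref{qr}) giving $2\notin\mathcal{S}_q$, hence $4^{t}\equiv-1$. The execution differs only in packaging: the paper shows $t\in\mathcal{S}_q$ from $t=-1/4$ and obtains $\sqrt{t}\in\mathcal{S}_q$ separately via $2\sqrt{t}=\pm\sqrt{-1}$, then proves $O_q(t)=t$ by contradiction (if $t^{t}\equiv-1$ then $t^{(t-1)/2}\equiv\pm2\in\mathcal{S}_q$); you instead invoke quadratic reciprocity for $t\in\mathcal{S}_q$, compute $t^{t}\equiv-(4^{t})^{-1}\equiv1$ directly from $t\equiv-4^{-1}$, and then read off $\sqrt{t}\in\mathcal{S}_q$ as a corollary of $t\in\mathcal{O}_q=\{\text{fourth powers}\}$. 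Your ordering is a little cleaner and avoids the detour through quadratic reciprocity if you simply adopt the paper's one-line $t=-1/4\in\mathcal{S}_q$, but substantively the two proofs are the same.
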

\begin{proof} Let $q=2t+1$, where $q$ and $t$ be primes.  Since  $4,-1\in\mathcal{S}_q$ and $4t=-1\in\mathbb{F}_q^*$, so that $t\in\mathcal{S}_q$ and hence $\sqrt{t}\in\mathbb{F}_q^*$.  From Lemma \ref{qr}, it follows that $2\notin\mathcal{S}_q$ as  $q\equiv5\pmod 8$. Since $2\sqrt{t}=\sqrt{-1}$ or $2\sqrt{t}=-\sqrt{-1}$ with $2$ and $\pm\sqrt{-1}$ do not belong to $\mathcal{S}_q$, so that $\sqrt{t}\in\mathcal{S}_q$ because the product of a square and non-square element always a non-square element in $\mathbb{F}_q^*$.
\begin{enumerate}
\item[(i)] In this item, we shall show that $t$ is an element of $\mathcal{O}_q$ of the order $t$, that is $O_q(t)=t$. Since $t\in\mathcal{S}_q$, so   $O_q(t)=t$ or $2t$.   On contrary assume that, $O_q(t)=2t$. This yields that  $t^{t}\equiv-1\pmod q$. Using the fact $4t\equiv-1\pmod q$ and  the arithmetic in $\mathbb{F}_q$, we have $t^{(t-1)/2}\pm2\equiv 0\pmod q$. This implies $2\in \mathcal{S}_q$ or $-2\in \mathcal{S}_q$, a contradiction.
\item[(ii)] Recall $2\notin\mathcal{S}_q$. Therefore the order of $2$ is $4t$. Using exponent rule, it follows that $O_q(4)=O_q(2^2)=\dfrac{4t}{\gcd(2,4t)}=2t$ and $O_q(4^2)=\dfrac{2t}{\gcd(2,2t)}=t$.  This completes the proof.\end{enumerate} \end{proof}
 \begin{rem}  Since $t\in\mathcal{O}_q=\langle 16\rangle$, so $t=16^i $ for some  unique integer $1\le i\le t-1$. Thus $\sqrt{t}=4^i$ and hence $\sqrt{t}\in\mathcal{S}_q$. For example taking $q=53$, then $t=13=16^6$ and $\sqrt{t}=16^3=15\in\mathcal{O}_{53}$.
\end{rem}
\begin{thm}\label{phi2nd4t+1} Let $q$  and $t$ be odd primes. 
\begin{enumerate}
\item[(i)]  If $q=4t+1$. Then, for any integer $n\ge k$,   the factorization of $\Phi_{2^{n}t}(x)$ over $\mathbb{F}_q$ is given by:
\begin{eqnarray*}
\Phi_{2^{n}t}(x)=
\displaystyle{\prod_{\substack{1\le j\le t-1}}(x^{2^{n-2}}\pm\sqrt{-1}\cdot16^j)}.\end{eqnarray*}
\item[(ii)] If $q=2t+1$ and $3\le k\le  u$. Then, for any $n\ge k$,  the factorization of $\Phi_{2^{n}t}(x)$ over $\mathbb{F}_q$ is given by: \begin{eqnarray*}
\Phi_{2^{n}t}(x)=\displaystyle{\prod_{\substack{1\le i\le 2^{k-3}\\1\le j\le t-1}} (x^{2^{n-k+1}}\pm 4^j\mathbb{T}(\beta_{2^k}^{2i-1})x^{2^{n-k}}+\chi(\beta_{2^k})16^{j})}.\end{eqnarray*} \end{enumerate}
\end{thm}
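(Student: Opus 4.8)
The plan is to obtain both factorizations by specializing Corollary~\ref{phi2nd4k+1} to the odd prime $d=t$, and then to replace the abstract primitive $t$th root of unity $\gamma\in\mathbb{F}_q^{*}$ appearing there by the concrete generator of $\mathcal{O}_q$ produced in Theorems~\ref{osq}, \ref{os} and \ref{os1}. The point is that, for $q$ and $t$ odd primes, the only odd prime dividing $q-1$ is $t$ itself, so Corollary~\ref{phi2nd4k+1} applies verbatim with $d=t$; what remains is purely a matter of naming a suitable $\gamma$.

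For part~(i) I would start from $q=4t+1$ with $q,t$ odd primes. Since $t$ is odd, $q-1=4t$ has $2$-adic valuation exactly $2$, so in the notation $q=2^{s}t+1$ of Section~3 we have $s=2$, and $t$ is the odd part $d$. Applying Corollary~\ref{phi2nd4k+1}(i) with $k=s=2$ gives, for every $n\ge2$, the factorization $\Phi_{2^{n}t}(x)=\prod_{1\le i\le 2,\ 1\le j\le t-1}(x^{2^{n-2}}-\alpha_{4}^{2i-1}\gamma^{j})$, all factors irreducible over $\mathbb{F}_q$ because $k=s$. Next I would observe that $\alpha_4$, a primitive $4$th root of unity, satisfies $\alpha_4^{2}=-1$, so $\{\alpha_4^{1},\alpha_4^{3}\}=\{\alpha_4,-\alpha_4\}=\{\sqrt{-1},-\sqrt{-1}\}$; hence the two values of $i$ combine into the pair of binomials $x^{2^{n-2}}\pm\sqrt{-1}\,\gamma^{j}$. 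Finally, by Theorem~\ref{osq} the group $\mathcal{O}_q$ is cyclic of order $t$, and by Theorem~\ref{os1}(ii) it equals $\langle16\rangle$ (for the single remaining small value $q=13$ one checks directly that $16$ still has order $t=3$); a generator of $\mathcal{O}_q$ is exactly a primitive $t$th root of unity of $\mathbb{F}_q^{*}$, so I may take $\gamma=16$ and substitute $\gamma^{j}=16^{j}$ to reach the stated form.

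For part~(ii) I would take $q=2t+1$ with $q,t$ odd primes, so $q\equiv3\pmod4$, $s=1$, $d=t$, and $u\ge3$, where $u$ is the largest integer with $2^{u}\mid(q^{2}-1)$. For each $k$ with $3\le k\le u$ and each $n\ge k$, Corollary~\ref{phi2nd4k+1}(ii) with $d=t$ gives $\Phi_{2^{n}t}(x)=\prod_{1\le i\le 2^{k-3},\ 1\le j\le t-1}(x^{2^{n-k+1}}\pm\gamma^{j}\mathbb{T}(\beta_{2^{k}}^{2i-1})x^{2^{n-k}}+\chi(\beta_{2^{k}})\gamma^{2j})$, with all factors irreducible precisely when $k=u$. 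By Theorems~\ref{osq} and \ref{os}, $\mathcal{O}_q$ is cyclic of order $t$ and equals $\langle4\rangle$, so $4$ is a primitive $t$th root of unity of $\mathbb{F}_q^{*}$; choosing $\gamma=4$ turns $\gamma^{j}$ into $4^{j}$ and $\gamma^{2j}$ into $16^{j}$, which is the asserted factorization.

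The only step beyond bookkeeping is the substitution of the explicit generator $16$ (resp.\ $4$) for the abstract $\gamma$: that is where the order computations of Theorems~\ref{osq}, \ref{os} and \ref{os1} are genuinely needed, since one must know that $\mathcal{O}_q$ has order exactly $t$ and is generated by the indicated power of $2$. I would also flag two small points of hygiene: the index $k$ does not occur in part~(i), where $k=s=2$ is forced, so the hypothesis there should read $n\ge2$ rather than $n\ge k$; and one must keep $n-2\ge0$ in part~(i) and $n\ge k$ in part~(ii) so that every exponent of $x$ that appears is a nonnegative integer.
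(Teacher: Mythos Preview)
Your proof is correct and follows exactly the paper's own route: specialize Corollary~\ref{phi2nd4k+1} to $d=t$ and then replace the abstract primitive $t$th root of unity $\gamma$ by the explicit generator of $\mathcal{O}_q$ supplied by Theorems~\ref{os} and~\ref{os1}. Your write-up simply spells out in detail what the paper compresses into a single sentence, and your side remarks about the stray index $k$ in part~(i) and the $q=13$ edge case are valid hygiene observations rather than deviations from the argument.
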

\begin{proof} Let $q$ and $t$ be odd primes.  The proof will follow by applying Theorem \ref{os} and Theorem \ref{os1} in Corollary \ref{phi2nd4k+1} in two different cases $q=4t+1$ and $q=2t+1$. \end{proof}
 \begin{rem}\label{RAM}  
 In particular,   the factorization $\Phi_{2^n\cdot5}(x)$ is  the same as in   \cite[Theorem 3.1\& 3.2]{ww})  when $q\equiv1\pmod 5$   and the factorization $\Phi_{2^n\cdot3}(x)$ is same as in \cite[Propostion 3 (see parts 1-3)]{fitz7}   when $q\equiv1\pmod 3$.
 Further,  in view of   \cite[Corollary 3.3]{ww}, the computation of the  coefficients of factors of  $\Phi_{2^n5}(x)$ requires to solve two nonlinear recurrence relations, while in our case, all coefficients can be obtained  directly using Theorem \ref{os} or Theorem \ref{os1}. \end{rem}
In the following two theorems,  we obtain the factorization of $x^{2^nt}-1$ into irreducible factors over $\mathbb{F}_q$ when  either $q=2t+1$ or $q=4t+1$. In particular, the factorization of $x^{2^nt}-1$ into irreducible factors in $\mathbb{F}_q[x]$ plays a very important role to describe cyclic codes of length $2^nt$ over $\mathbb{F}_q$. 
\begin{thm}\label{factt}Let $q$ and $t$ be odd primes such that $q=2t+1$, then  \begin{eqnarray*}
x^{2^nt}-1&=&\prod_{\substack{0\le j\le t-1}}(x\pm 4^{j})\prod_{\substack{2\le k\le u-1\\1\le i\le 2^{k-2}\\0\le j\le t-1}}(x^2-4^j\mathbb{T}(\beta_{2^k}^{2i-1})x+4^{2j})\\&&
\prod_{\substack{0\le r\le n-u\\1\le i\le 2^{u-3}\\ 0\le j\le t-1}}(x^{2^{r+1}}\pm 4^j\mathbb{T}(\beta_{2^u}^{2i-1})x^{2^r}-4^{2j}).
\end{eqnarray*}  \end{thm}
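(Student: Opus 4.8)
The plan is to specialize the general factorization of $x^{2^nd}-1$ from Theorem \ref{fact2t+1} to the case $d = t$, where $q = 2t+1$ forces $q \equiv 3 \pmod 4$ (since $t$ is odd, $q-1 = 2t$ with $t$ odd means $s = 1$), and then rewrite all the coefficients $\gamma^j$ in terms of $4^j$ using the results of Section 4. First I would invoke Theorem \ref{fact2t+1} with $d = t$: since $t$ is an odd prime dividing $q-1 = 2t$, that theorem applies verbatim and gives
\begin{eqnarray*}
x^{2^nt}-1 &=& (x^{2^n}-1)\prod_{1\le j\le t-1}(x\pm \gamma^j)\prod_{\substack{2\le k\le u-1\\1\le i\le 2^{k-2}\\1\le j\le t-1}}(x^2-\gamma^{j}\mathbb{T}(\beta_{2^k}^{2i-1})x+\gamma^{2j})\\
&&\prod_{\substack{0\le r\le n-u\\1\le i\le 2^{u-3}\\1\le j\le t-1}}(x^{2^{r+1}}\pm \gamma^{j}\mathbb{T}(\beta_{2^u}^{2i-1})x^{2^r}-\gamma^{2j}),
\end{eqnarray*}
where $\gamma$ is a primitive $t$th root of unity in $\mathbb{F}_q^*$.

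The key observation is that $\langle\gamma\rangle$ is precisely the unique subgroup of $\mathbb{F}_q^*$ of order $t$, which by Theorem \ref{osq} equals $\mathcal{O}_q$, and by Theorem \ref{os} we have $\mathcal{O}_q = \langle 4\rangle$. Hence $4$ is itself a primitive $t$th root of unity, and we are free to choose $\gamma = 4$. Making this substitution turns every $\gamma^j$ into $4^j$, every $\gamma^{2j}$ into $4^{2j}$, and the trinomial coefficient pattern $-\gamma^{2j}$ into $-4^{2j}$; the middle product over $k$ from $3$ to $u-1$ combines with the linear factors to absorb the cases $k = 2$ as written (noting $\mathbb{T}(\beta_4) = 0$ by Lemma \ref{trace}, so the $k=2$ trinomials degenerate, which is why only $k \ge 2$ with the stated ranges appear cleanly). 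The last step is a bookkeeping adjustment of the index ranges: in Theorem \ref{fact2t+1} the inner products run over $1 \le j \le t-1$ together with a separate factor $(x^{2^n}-1) = \prod_{0\le j \le t-1}(x^{2^r} - 1\cdot\text{stuff})$ accounting for $j = 0$; I would absorb that leftover $j=0$ strand into each product so that all three products run over $0 \le j \le t-1$, exactly as in the claimed statement. Concretely, $(x^{2^n}-1)$ itself factors (over $\mathbb{F}_q$ with $s=1$) as $(x\pm 1)\prod_{2\le k\le u-1}\prod_i (x^2 - \mathbb{T}(\beta_{2^k}^{2i-1})x + 1)\prod_{0\le r\le n-u}\prod_i(x^{2^{r+1}} \pm \mathbb{T}(\beta_{2^u}^{2i-1})x^{2^r} - 1)$ by Theorem \ref{fact2t+1} applied with $d = 1$, which supplies exactly the $j = 0$ terms of each product.

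The main obstacle is not conceptual but organizational: I must verify that merging the $d=1$ factorization of $x^{2^n}-1$ with the $1 \le j \le t-1$ strands genuinely reproduces the uniform products $\prod_{0\le j\le t-1}$ in the claimed form, i.e. that the $j=0$ coefficients $4^0 = 1$, $4^{2\cdot 0} = 1$ match what Theorem \ref{fact2t+1} (with $d=1$) gives for $x^{2^n}-1$, and that the outer index ranges on $k$, $i$, $r$ agree. One should double-check the endpoint $k = 2$: in the claimed statement the middle product starts at $k=2$ with $1\le i\le 2^{k-2} = 1$ and coefficient $4^j\mathbb{T}(\beta_4^{2i-1}) = 0$, so that factor is $(x^2 + 4^{2j})$; I would confirm this is consistent with the $\Phi_4$ strand and with $\mathbb{T}(\beta_4) = 0$ from Lemma \ref{trace}. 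The irreducibility of each listed factor is already established in Theorem \ref{fact2t+1} (via Lemma \ref{birr} and Lemma \ref{tirr}) and transfers unchanged under the relabeling $\gamma \mapsto 4$, so no new irreducibility argument is needed.
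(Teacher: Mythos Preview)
Your proposal is correct and follows the same route as the paper: the paper's own proof is a one-line appeal to Theorem~\ref{fact2t+1}, Theorem~\ref{os}, and Lemma~\ref{trace}, and you invoke exactly these ingredients, specializing $d=t$, choosing $\gamma=4$ via $\mathcal{O}_q=\langle 4\rangle$, and then absorbing the $(x^{2^n}-1)$ factor as the $j=0$ strand of each product. Your explicit bookkeeping (checking that the $j=0$ terms reproduce the factorization of $x^{2^n}-1$, and that the $k=2$ case collapses via $\mathbb{T}(\beta_4)=0$) is a welcome elaboration of what the paper leaves implicit, but it is not a different argument.
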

\begin{proof} The proof follows immediately by using Theorem \ref{trace}, Theorem \ref{fact2t+1} and Theorem \ref{os}.\end{proof}

\begin{thm}\label{fact2nt4t+1} Let $q$ and $t$ be odd primes such that $q=4t+1$. Then the factorization of $x^{2^nt}-1$ into the product of  $2nt$ irreducible polynomials over $\mathbb{F}_q$  is  given as: 
\begin{eqnarray*}\label{ex2nd4k+1}
x^{2^nt}-1=\prod_{j=0}^{t-1}\bigg((x\pm{16}^j)(x\pm\sqrt{-1}\cdot{16}^j)\prod_{\substack{1\le r\le n-2}}(x^{2^r}\pm\sqrt{-1}\cdot{16}^j)\bigg).
\end{eqnarray*}\end{thm}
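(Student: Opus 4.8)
The plan is to obtain this as the $d=t$, $s=2$ case of Theorem~\ref{fact2nd4k+1}, after making the two roots of unity occurring there explicit. Since $q=4t+1$ with $t$ an odd prime, $q\equiv1\pmod4$ and, in the notation $q=2^st+1$, one has $s=2$ with odd part exactly $t$; also $q\equiv1\pmod{2t}$, so Theorem~\ref{fact2nd4k+1} applies with $d=t$ and already gives a factorization of $x^{2^nt}-1$ of the asserted shape. It then remains only to identify the primitive $4$th and $t$th roots of unity there, and to read off the irreducibility and the factor count, both of which are contained in the conclusion of Theorem~\ref{fact2nd4k+1}.

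First I would identify $\alpha_4$ with $\sqrt{-1}$: since $4\mid q-1$, the cyclic group $\mathbb{F}_q^*$ has a unique element of order $4$, it squares to $-1$, and the exponents $\alpha_4^{2i-1}$ for $1\le i\le2^{s-1}=2$ are exactly $\pm\sqrt{-1}$. Next I would show that $\gamma:=16$ may serve as the primitive $t$th root of unity. By Theorem~\ref{osq}, $\mathcal{O}_q$ is the subgroup of $\mathbb{F}_q^*$ of order $t$, hence coincides with the group $\mu_t$ of $t$th roots of unity; and by Theorem~\ref{os1} (concretely, $q\equiv5\pmod8$ gives $2\notin\mathcal{S}_q$ by Lemma~\ref{qr}, so $O_q(2)=4t$ and $O_q(16)=O_q(2^4)=t$) the element $16$ generates $\mathcal{O}_q$. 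Therefore $\{16^j:0\le j\le t-1\}=\mu_t$, and we may put $\gamma=16$ throughout Theorem~\ref{fact2nd4k+1}.

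Finally I would assemble the factors, using $s=2$. For $k=1$ the inner index $i$ takes only the value $1$ and $\alpha_2=-1$, so $x-\alpha_2^{2i-1}\gamma^j=x+16^j$; together with the leading factor $x-\gamma^j=x-16^j$ this yields $(x\pm16^j)$. For $k=2$ the factors $x-\alpha_4^{2i-1}\gamma^j$ with $i=1,2$ yield $(x\pm\sqrt{-1}\cdot16^j)$, and for $n>2$ the factors $x^{2^r}-\alpha_4^{2i-1}\gamma^j$ with $1\le r\le n-s=n-2$ yield $(x^{2^r}\pm\sqrt{-1}\cdot16^j)$ — this last family being empty when $n=2$. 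Taking the product over $0\le j\le t-1$ reproduces the stated formula. The irreducibility of every displayed factor and the total count $2^{s-1}(n-s+2)t=2nt$ of factors are precisely the conclusions of Theorem~\ref{fact2nd4k+1} in the range $n>s=2$ — each binomial $x^{2^r}-a$ here has $O_q(a)\in\{4,4t\}$, so Lemma~\ref{birr} gives irreducibility since $4\mid q-1$ and $2\nmid(q-1)/O_q(a)$ — while the cases $n\le2$ follow the same way from the $n\le s$ branch of Theorem~\ref{fact2nd4k+1} (for $n=1$ the statement reads simply $x^{2t}-1=\prod_{j=0}^{t-1}(x\pm16^j)$). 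I do not anticipate a genuine obstacle: all the substance already lives in Theorem~\ref{fact2nd4k+1} and Theorem~\ref{os1}, so the only point demanding care is bookkeeping — matching the $\pm$ signs and the index ranges of Theorem~\ref{fact2nd4k+1} against the compact $(x\pm\cdots)$ notation of the statement.
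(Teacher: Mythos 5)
Your proposal is correct and follows essentially the same route as the paper, whose proof is exactly the specialization of Theorem \ref{fact2nd4k+1} (with $d=t$, $s=2$, $\gamma=16$, $\alpha_4=\sqrt{-1}$) via Theorem \ref{os1}; you have merely filled in the bookkeeping the paper leaves implicit. The only point worth a footnote is that $2\notin\mathcal{S}_q$ alone gives $O_q(2)\in\{4,4t\}$, and one needs $q>5$ (automatic here since $t\ge3$) to exclude $O_q(2)=4$ — the same small step the paper's own proof of Theorem \ref{os1}(ii) glosses over.
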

\begin{proof} The proof follows immediately from Theorem \ref{fact2nd4k+1} and Theorem \ref{os1}.\end{proof}
\section{Worked Examples}
In this section, we give   some examples to illustrate our results. In particular, if $q$ and $t$ are primes such that $q\in\{2t+1,4t+1\}$, then all coefficients of irreducible factors can be determined directly.
 \begin{Exaexa} Let $q=347=2\cdot 173+1$. Then $s=1$,  $t=173$ and $u=3$. Now $\beta_2={-1}$,  $\mathbb{T}(\beta_4)=0$ and $\mathbb{T}(\beta_8)=\sqrt{-2}=
(-2)^{87}=107$. By Theorem \ref{os},  $4$ is a primitive $173$th root of unity in $\mathbb{F}_{347}^*$. This follows that $x^{173}-1=\prod_{j=0}^{172}(x-4^j)$ and $x^{173}+1=\prod_{j=0}^{172}(x+4^j)$. Also $x^{346}+1=\prod_{j=0}^{172}(x^2+4^j)$. Further, for $n\ge3$,  by Theorem \ref{factt}, the factorization of $x^{173\cdot 2^n}-1$ into  $173(2n-1)$  irreducible factors over $\mathbb{F}_{347}$  is given as: \begin{eqnarray*}
x^{173\cdot2^n}-1&=&\prod_{0\le j\le 172}\bigg((x\pm4^j)(x^2+4^{2j})\prod_{\substack{0\le r\le n-3}}(x^{2^{r+1}}\pm4^{j}\cdot 107x^{2^r}-4^{2j})\bigg).\end{eqnarray*}\end{Exaexa}
\begin{Exaexa}\label{23} Let $q=23=2\cdot 11+1$. Then $s=1$,  $t=11$ and $u=4$. In $\mathbb{F}_{23}^*$,  $\beta_2={-1}$,  $\mathbb{T}(\beta_4)=0$, $\mathbb{T}(\beta_8)=\sqrt{2}=
2^{6}=-5$ and $\mathbb{T}(\beta_{16})=\sqrt{-5-2}=
(-7)^{6}=4$, $\mathbb{T}_3(\beta_{16})=7$.  By Theorem \ref{os},   $4$ is a primitive $11$th root of unity in $\mathbb{F}_{11}^*$. This follows that $x^{11}-1=\prod_{j=0}^{10}(x-4^j)$ and $x^{11}+1=\prod_{j=0}^{10}(x+4^j)$. Also $x^{22}+1=\prod_{j=0}^{10}(x^2+4^j)$. Further,   by Theorem \ref{factt}, the factorization of $x^{352}-1$ into  $143$  irreducible factors over $\mathbb{F}_{23}$  is given as:\begin{eqnarray*}
x^{352}-1&=&\prod_{\substack{0\le j\le 10}}\bigg((x\pm 4^{j})\prod_{\substack{2\le k\le 3\\1\le i\le 2^{k-2}}}(x^2-4^j\mathbb{T}(\beta_{2^k}^{2i-1})x+4^{2j})\\&&
\prod_{\substack{1\le i\le 2}}(x^2\pm 4^j\mathbb{T}(\beta_{16}^{2i-1})x-4^{2j})(x^4\pm 4^j\mathbb{T}(\beta_{16}^{2i-1})x^2-4^{2j})\bigg)\\&=&
(x^{44}-1)\prod_{\substack{0\le j\le 10\\\eta\in\{4,7\}}}\bigg((x^2\pm4^j\cdot 5 x+4^{2j})\\&&\cdot(x^2\pm 4^j\eta x-4^{2j})(x^4\pm 4^j\eta x^2-4^{2j})\bigg).\end{eqnarray*} 
Further,  using recursive approach, the factorization of $x^{704}-1$ into  $187$  irreducible factors over $\mathbb{F}_{23}$  is given by \begin{eqnarray*}
x^{704}-1&=&(x^{352}-1)\prod_{\substack{0\le j\le 10\\\eta\in\{4,7\}}}(x^8\pm 4^j\eta x^4-4^{2j})\big).\end{eqnarray*}  
Note that $23^4\equiv1\pmod {352}$. By Theorem \ref{phi2nd4t+1},  the  factorization of cyclotomic polynomial $\Phi_{352}(x)$ into $40$ (i.e. $\phi(352)/4$) irreducible factors over $\mathbb{F}_{23}$ is given by: \begin{eqnarray*}
\Phi_{352}(x)&=&\displaystyle{\prod_{\substack{1\le i\le 2\\1\le j\le 10}} (x^{4}\pm 4^j\mathbb{T}(\beta_{16}^{2i-1})x^{2}-16^{j})}\\&=& \displaystyle{\prod_{j=1}^{10} (x^{4}\pm 4^j\cdot4 x^{2}-16^{j})(x^{4}\pm 4^j\cdot7 x^{2}-16^{j})}.\end{eqnarray*} 
\end{Exaexa} 
 \begin{Exaexa} Let $q=149=4\cdot 37+1$. Then $s=2$,  $t=37$.  By Theorem \ref{os1},   $\alpha_4=\sqrt{-1}=\sqrt{148}=2\sqrt{37}=2\cdot 16^9=-44$. Using Theorem \ref{fact2nt4t+1}, the factorization of $x^{2^n\cdot37}-1$ over $\mathbb{F}_{149}$ can be written as a product of  $74n$  irreducible factors as:
\begin{eqnarray*}\label{ex2nd4k+1}
x^{2^n\cdot37}-1=\prod_{j=0}^{36}\bigg((x\pm{16}^j)(x\pm44\cdot{16}^j)\prod_{\substack{1\le r\le n-2}}(x^{2^r}\pm44\cdot{16}^j)\bigg).
\end{eqnarray*}\end{Exaexa}
\begin{Exaexa} Let $q=53=4\cdot 13+1$. Then $s=2$,  $t=13$.  By Theorem \ref{os1},   $\alpha_4=\sqrt{-1}=\sqrt{52}=2\sqrt{13}=2\cdot 16^3=30$. Using Theorem \ref{fact2nt4t+1}, the factorization of $x^{2^n\cdot13}-1$ over $\mathbb{F}_{53}$ can be written as a product of  $26n$  irreducible factors as:
\begin{eqnarray*}\label{ex2nd4k+1}
x^{2^n\cdot13}-1=\prod_{j=0}^{12}\bigg((x\pm{16}^j)(x\pm30\cdot{16}^j)\prod_{\substack{1\le r\le n-2}}(x^{2^r}\pm30\cdot{16}^j)\bigg).
\end{eqnarray*}\end{Exaexa}
\begin{Exaexa}  Let $q=59=2\cdot 29+1$. Then $t=29$ and $u=3$. Then $\mathbb{T}(\beta_4)=0$ and $\mathbb{T}(\beta_8)=\sqrt{-2}=(-2)^{15}=36$.  Then by Theorem \ref{phi2nd4t+1}, the factorization $\Phi_{464}(x)$ over $\mathbb{F}_{59}$ is given by
\begin{eqnarray*}\Phi_{464}(x)=
\displaystyle{\prod_{j=1}^{28} (x^{4}\pm 4^j\cdot{36}x^{2}-{16}^j)}
\end{eqnarray*} with $56$ irreducible trinomials over $\mathbb{F}_{59}$. \end{Exaexa}


\begin{thebibliography}{21}
\bibitem{ber} E. R. Berlekamp, Bit- Serial Reed-Solomon encodes. IEEE Trans. Inf. Theory. {\bf{28}}  869-874  (1982).

\bibitem{bla} I. F. Blake, S. Gao, R.C. Mullin, Explicit Factorization of $x^{2^k}+1$ over $\mathbb{F}_p$ with $p\equiv$ 3(mod 4). App. Algebra Engrg. Comm. Comput. {\bf 4},  89-94 (1993).
\bibitem{bro}  F. E. Brochero Mart\'{i}nez,  C. R. Giraldo Vergara, L. B.   de Oliveira, Explicit factroization of $x^n-1\in\mathbb{F}_q[x]$. Des. Codes Cryptogr. {\bf{77}}, 277-286 (2015). 
\bibitem{chen}B. Chen, L.  Li, R. Tuerhong, Explicit factorization of $x^{{2^m}{p^n}}-1$ over a finite field. Finite Fields  Appl. {\bf{24}},  95-104 (2013). 
\bibitem{fitz} R. W. Fitzgerald, J. L.  Yucas, Factors of Dickson polynomials  over finite fields. Finite Fields Appl. {\bf{11}},  724-737 (2005).
\bibitem{fitz7} R. W. Fitzgerald, J. L.  Yucas, Explicit factorization of cyclotomic and Dickson polynomials over finite fields. Arithmetic of Finite Fields. Lecture Notes in Comput. Sci. vol. 4547,  pp. 1-10. Springer, Berlin (2007).
\bibitem{gol}S. Golomb, G.  Gong, Signal design for good correlation: For wireless communication, cryptography, and radar. Cambridge University Press. Cambridge (2005).

\bibitem{jj} G. A. Jones, J. M. Jones, Elementary Number Theory. Springer-Verlag, Berlin (1998).
\bibitem{lidl} R. Lidl,  H. Niederreiter,  Introduction to  finite fields and their applications. Cambridge University Press, Cambridge (1986).
\bibitem{meyn}  H. Meyn, Factorization of cyclotomic polynomial $x^{2^n}+1$ over finite fields. Finite Fields Appl. {\bf 2}, 439-442 (1996).
\bibitem{roman}S. Roman, Field Theory. Springer-Verlag. Graduate Texts in Mathematics, New York (1995)
\bibitem{singh}M. Singh, S. Batra, Some special cyclic codes of length $2^n$. J. Algebra Appl. {\bf{17}}(1), 170002 (1-17)  (2017).
\bibitem{stein} G. Stein, Using the theory of cyclotomy to factor cyclotomic polynomials over finite fields. Math. Comp. {\bf{70}}(235), 1237-1251  (2001).
\bibitem{tw} A. Tuxanidy, Q.  Wang, Composed products and factors of cyclotomic polynomials over finite fields. Des. Codes Cryptogr. {\bf{69}},  203-231 (2013).
\bibitem{wan}Z. Wan, Lectures on Finite Fields and Galois Rings. World Scientific Publishing, Singapore, (2003).
\bibitem{ww}L. Wang,  Q. Wang, On explicit factors of cyclotomic polynomials over finite fields. Des. Codes Cryptogr. {\bf{63}}(1), 87-104 (2011).

\end{thebibliography}
\end {document}